\documentclass[12pt]{elsarticle}

\usepackage{amssymb}
%% The amsthm package provides extended theorem environments
\usepackage{amsthm}

\usepackage{amsfonts,amsmath}

\theoremstyle{remark}% default
\newtheorem{theorem}{Theorem}
\newtheorem{lemma}{Lemma}
\newtheorem{proposition}{Proposition}

\newtheorem{remark}{Remark}
\newtheorem{example}{Example}

\newtheorem{corollary}{Corollary}

\begin{document}

\begin{frontmatter}

\title{A Flat Triangular Form for Nonlinear Systems with Two Inputs: Necessary
and Sufficient Conditions\tnoteref{sup}}

\tnotetext[sup]{The first author was fully supported by CAPES and
FUNPESQUISA/UFSC. The second author was partially supported by CNPq. The
third author was partially supported by ``Agence Nationale de la Recherche''
(ANR),  Projet Blanc EMAQS number ANR-2011-BS01-017.}

\author[UFSC]{H.B.\ Silveira\corref{cor}}
\ead{hector.silveira@ufsc.br}
\address[UFSC]{Departamento de Automa\c{c}\~ao e Sistemas (DAS),
Federal University of Santa Catarina (UFSC), Florian\'opolis, Brazil}
\cortext[cor]{Corresponding author.}

\author[USP]{P.S.\ Pereira da Silva}
\ead{paulo@lac.usp.br}
\address[USP]{Escola Polit\'{e}cnica -- PTC,
University of S\~ao Paulo (USP), S\~ao
Paulo, Brazil}

\author[MPT]{P.\ Rouchon}
\ead{pierre.rouchon@mines-paristech.fr}
\address[MPT]{Centre Automatique et Syst\`emes (CAS), Mines ParisTech, Paris, France}

\date{~}
\begin{abstract}
%% Text of abstract
The present work establishes necessary and sufficient conditions for a
nonlinear system with two inputs to be described by a specific triangular form.
Except for some regularity conditions, such triangular form is flat.
This may lead to the discovery of new flat
systems. The proof relies on well-known results for driftless
systems with two controls (the chained form) and on geometric tools from
exterior differential systems. The paper also illustrates the application of
its results on an academic example and on a reduced order
model of an induction motor.
\end{abstract}

\begin{keyword}

Differential flatness \sep exterior differential systems \sep nonlinear
systems \sep triangular forms
\end{keyword}

\end{frontmatter}

% \linenumbers

%% main text
\section{Introduction}
Consider the control system with two inputs
%% Control System
\begin{equation}\label{cs}
    \dot{x}=f(x) + g_1(x)u_1 + g_2(x)u_2,
\end{equation}
where $x \in U \subset \mathbb{R}^n$ is the state, $U$ is open, $n
\geq 2$,
$u=(u_1,u_2) \in \mathbb{R}^2$ is the control and $f, g_1, g_2$:~$U
\rightarrow
\mathbb{R}^n$ are smooth (i.e.\ infinitely differentiable) mappings.
Recall that \eqref{cs} is driftless when $f=0$ and that $u=\alpha(x)
+ \beta(x) v$ is a regular static state feedback
defined on an open set $V \subset U$ if the mappings
$\alpha=(\alpha_1,\alpha_2)$:~$V \rightarrow \mathbb{R}^2$ and
$\beta=(\beta_{ij})$:~$V \rightarrow \mathbb{R}^{2 \times 2}$ are smooth and
the matrix
$\beta(x)=(\beta_{ij}(x)) \in \mathbb{R}^{2 \times 2}$ is invertible, for
all $x \in V$, where $v=(v_1,v_2) \in \mathbb{R}^2$ is the new control.
It was established in \cite{Mur95} necessary and sufficient conditions
for \eqref{cs} with $f=0$ to be (locally) described around a given $x_0 \in U$
by the chained form
%% Triangular Form - DriftLess Theorem
\begin{equation}\label{tf-dlt}
  \begin{split}
  \dot{z}_1 &= z_2 v_1, \\
  & \;\; \vdots   \\
  \dot{z}_{n-2} &= z_{n-1}v_1, \\
  \dot{z}_{n-1} &= v_2, \\
  \dot{z}_n &= v_1,
  \end{split}
\end{equation}
after a change of coordinates $z=\varphi(x)$
and a regular feedback $u=\beta v$ (see Theorem~\ref{cfec} in
Section~\ref{pr-tf}). In order to take into account the drift $f$ in
(\ref{cs}), this work adds some geometric conditions (concerning exterior differential
systems) to the ones of \cite{Mur95} and presents necessary and sufficient
conditions
for (\ref{cs}) (with or without drift) to be described around a given $x_0 \in
U$ by the triangular form
%% Triangular Form - Equations
\begin{equation}\label{tf-e}
    \begin{split}
    \dot{z}_1&=\phi_1(z_1,z_2,z_n)+z_2v_1, \\
    \dot{z}_2&=\phi_2(z_1,z_2,z_3,z_n)+z_3 v_1, \\
             & \;\; \vdots \\
    \dot{z}_{n-3}&=\phi_{n-3}(z_1,z_2,\dots,z_{n-2},z_n)+z_{n-2}v_1, \\
    \dot{z}_{n-2}&=\phi_{n-2}(z_1,z_2, \dots,z_n)+z_{n-1}v_1, \\
    \dot{z}_{n-1}&= v_2, \\
    \dot{z}_n&=v_1,
    \end{split}
\end{equation}
after a change of coordinates $z=\varphi(x)$ and a regular static state feedback
 $u=\alpha+ \beta v$.
Such result is the main contribution of this paper (see Theorem~\ref{tf-nsc} in Section~\ref{nsc-tf}). Note that the control vector fields in \eqref{tf-dlt} and \eqref{tf-e} are exactly the same, and that the drift $f$ is taken into account by means of condition~2) of Theorem~\ref{tf-nsc}.
Furthermore, \cite{MarRou94} also considers (\ref{cs}) with $f=0$ and exhibits sufficient conditions for the chained form description
\eqref{tf-dlt} to hold generically, that is, on an open
dense set (see Theorem~\ref{tf-dls}). This work then
establishes as an immediate corollary of the main result sufficient geometric conditions for
\eqref{cs} to be generically described by the triangular form \eqref{tf-e} (see
Corollary~\ref{tf-dsc}).

There are many motivations for choosing the specific triangular form (\ref{tf-e}). Firstly, \eqref{tf-e} is (essentially) flat. This is discussed in detail in the subsequent paragraphs. Secondly, (\ref{tf-e}) may be seen as a generalization of the chained form (\ref{tf-dlt}) to
systems with drift. Although there are many possible such generalizations, the conditions obtained in this paper with respect to the triangular form \eqref{tf-e} may be regarded as generalizations of the ones of  \cite{Mur95} and \cite{MarRou94} for the chained form (\ref{tf-dlt}), since new conditions have been added to them in order to consider the drift $f$ in (\ref{cs}).
Thirdly, the control literature has recently demonstrated interest in the triangular form (\ref{tf-e}) (see e.g.\ \cite{LiChaoSuChu13}). To the best of the authors' knowledge, the first work in the literature that considered and characterized the triangular form (\ref{tf-e}) was the thesis \cite[Teorema~3.10, p.\ 51]{Silveira10} of one of the authors. The present paper slightly improves the results of \cite{Silveira10} and illustrates the constructive aspects of the results in the examples. Lastly, \cite{BouBouBarKra11} provides necessary and sufficient conditions for a multi-input nonlinear system to be described, after a change of coordinates, by a flat triangular canonical form which differs from (\ref{tf-e}). In Example~1 in Section~\ref{nsc-tf}, one exhibits a nonlinear system of the form \eqref{cs} which is transformed into (\ref{tf-e}) by means of the application of the results here established, but which fails to meet the referred conditions of \cite{BouBouBarKra11}.

It is well-known (and immediate to verify) that $y=(z_1, z_n)$ is a
flat\footnote{For the concept of flatness, see e.g.\
\cite{MarMurRou03},
\cite{Lev09}, \cite{SirAgr04}.} output for the chained form (\ref{tf-dlt}) around the points in which $v_1
\neq 0$. For the triangular form \eqref{tf-e}, straightforward computations using the
implicit function theorem show that $y=(z_1, z_n)$ is a flat output
around the points that satisfy $v_1 + \partial \phi_1 / \partial z_2(z) \neq 0,
\dots, v_1 + \partial \phi_{n-2} / \partial z_{n-1}(z) \neq 0$.
Hence, except for these regularity conditions, this paper establishes
sufficient conditions for \eqref{cs}
to be flat, which therefore may lead to the discovery of new flat systems. The corresponding constructive procedure for describing \eqref{cs} by the form \eqref{tf-e}, and hence constructing a flat output, is illustrated in Example~2 at the end of Section~\ref{nsc-tf}. It may be summarized as follows (see Remark~\ref{construction} for further details). Assume that both conditions in Theorem~\ref{tf-nsc} are met. First, one constructs a change of coordinates $z=\varphi(x)$ and $\beta(x)$ in the regular feedback $u=\beta v$ such that the control vector fields $g_1, g_2$ in \eqref{cs} are described by the chained form \eqref{tf-dlt} (there are available methods for this in the literature, and the existence of such $z=\varphi(x)$ and $u=\beta v$ is ensured by condition 1) of Theorem~\ref{tf-nsc}). Then, condition 2) of Theorem~\ref{tf-nsc} implies that after applying the regular feedback $u=\alpha + \beta v$ in \eqref{cs}, where $\alpha = \beta \overline{\alpha}$ and $\overline{\alpha}= -( \langle dz_n, f \rangle, \langle dz_{n-1}, f \rangle)$, one has that the closed-loop drift $f+\alpha_1 g_1+\alpha_2 g_2$ exhibits the triangular structure in \eqref{tf-e} in the coordinates $z=\varphi(x)$. Hence,
\eqref{cs} has been transformed into \eqref{tf-e}. Lastly, $y=(z_1, z_n)$ is the resulting flat output constructed.

One recalls that the practical importance of flat systems is that the control problems of motion planning, trajectory tracking and flat
output tracking are easily (and sometimes trivially) treated
\cite[]{MarMurRou03, Lev09, SirAgr04}. It is still an open problem in the control literature how to determine if a general nonlinear system is flat and, in such case, how a flat output can be constructed. As is well-known, a system which is linearizable by regular static state feedback is flat, but the converse is false. The results here
presented could establish that a system of the form \eqref{cs} is flat even when it
is not linearizable by regular static feedback.
This is the case, for instance, when the distribution determined by the control vector fields $g_1, g_2$
in \eqref{cs} is not involutive, so that \eqref{cs} is not static feedback linearizable (see e.g.\ \cite[Proposition 9.16]{Sas99}).
Nonetheless, such system could still meet the conditions in Theorem~\ref{tf-nsc} (or Corollary~\ref{tf-dsc}) of the present work, in which case $y=(z_1,z_n)$ would be a flat output.
Therefore, the results here established may lead to the discovery of new flat systems. As a final remark, it is clear that not all flat systems are static feedback equivalent to the triangular form (3). Indeed, if (1) is static feedback linearizable, then the distribution spanned by $g_1, g_2$ must be involutive, and hence condition 1) of Theorem 3) is violated.

The rest of the paper is organized as follows. Section~\ref{mcn} sets up the
notation and presents the specific distributions and
concepts from exterior differential systems (associated and retracting spaces)
that are required in subsequent sections. The well-known results of \cite{Mur95} and
\cite{MarRou94} for driftless systems
that were mentioned above are
given in Section~\ref{pr-tf}. Section~\ref{nsc-tf} exhibits the proof of the
main result of this work and its corollary,
along with two examples illustrating its application.

%% Mathematical Conventions and Notations
\section{Mathematical Preliminaries}\label{mcn}

The sets of natural and real numbers are denoted as $\mathbb{N}$ (where $0 \in \mathbb{N}$) and $\mathbb{R}$, respectively. Throughout the text, unless otherwise stated, $M$ is a
finite-dimensional
smooth manifold
and $U \subset M$ is an open set. Then,
$\mathcal{C}^{\infty}(U)$, $\mathfrak{X}(U)$ and $\Omega^{r}(U)$ denote the set
of smooth functions on $U$, the set of
smooth vector fields on $U$ and the set of $r$-forms on $U$, respectively, for
all $r \in \mathbb{N}$. Given $p \in M$, $T_p M$ and $T_p^*M$ are the tangent
and
cotangent spaces to $M$
at $p$, respectively, and $\Omega_p^{r} M$ is the set of
alternating tensors of covariant  order $r$ on $T_p M$.
The set $\Omega_p M=\bigoplus_{r=0}^{\infty} \Omega_p^{r}M$
(direct sum) is the exterior algebra over $T_p M$, where
$\Omega_p^{0}M=\mathbb{R}$, and
$\Omega(U)=\bigoplus_{r=0}^{\infty} \Omega^{r}(U)$ is  the exterior algebra on
$U$,  where $\Omega^{0}(U)=\mathcal{C}^{\infty}(U)$. Let $f \in
\mathcal{C}^{\infty}(U)$, $X, Y \in
\mathfrak{X}(U)$, $\omega \in \Omega(U)$. Then: $L_{X} f \in
\mathcal{C}^{\infty}(U)$ denotes the Lie derivative of $f$
with respect to $X$, $[X,Y]
\in \mathfrak{X}(U)$ is the Lie bracket of $X$ and $Y$, $L_{X}
\omega \in \Omega(U)$ is the Lie derivative of $\omega$ with respect to $X$,
and $d\omega \in
\Omega(U)$ is the exterior derivative of $\omega$.
Let $X^1, \dots, X^r \in
\mathfrak{X}(U)$ and $V \subset U$. Given $q \in V$, $\{ X^1,
\dots, X^r \}|_q$ denotes $\{X^1_q, \dots,
X^r_q\}$, and when $X^1_p, \dots,
X^r_p$ are independent vectors for all $p \in V$, one simply says
that $X^1, \dots, X^r$ are independent \emph{on} $V$. Similar conventions
shall also be adopted for $1$-forms.

Following \cite{Sas99}, one considers in this work a
distribution (respectively, codistribution) on $U$ simply as a
submodule of $\mathfrak{X}(U)$ (resp., $\Omega^1(U)$).
Let $\Delta$ be a
distribution on $U$ and $p \in U$.
Define $\Delta_p=\{X_p \in T_p M
\; | \; X_p=Y_p, \mbox{ where } Y \in \Delta
\}$. The concept of a regular point and the
annihilator $\Delta^\perp \subset \Omega^{1}(U)$ of $\Delta$ are defined in the
usual manner. One writes $\dim(\Delta)=r$ when $\dim (\Delta_p)=r$, for all $p
\in U$.
Given an open set $V \subset U$,
define the distribution
$\Delta|V=\mbox{span}_{\mathcal{C}^{\infty}(V)}\{ X \in
\mathfrak{X}(V) \; | \; X=Y|V, \mbox{ where } Y \in \Delta \}$.
Note that $\{ X \in \mathfrak{X}(V) \; | \; X=Y|V, \mbox{ with
} Y \in \Delta \}$ is not necessarily a distribution on $V$.
It is clear that analogous concepts to the ones presented above for
distributions can be readily defined for codistributions.
Let $\Delta_1, \Delta_2, \Delta_3$ be distributions on $U$. Consider the
distributions $\Delta_1 + \Delta_2 = \{ X \in \mathfrak{X}(U) \; | \;
X=X^1+X^2, \mbox{ where } X^1 \in \Delta_1, X^2 \in \Delta_2 \}$ and
$[ \Delta_1 , \Delta_2 ] = \mbox{span}_{\mathcal{C}^{\infty}(U)}\{ X \in
\mathfrak{X}(U) \; | \;
    X=[X^1,X^2], \mbox{ where } X^1 \in \Delta_1, X^2 \in \Delta_2 \}$.
Note that $\Delta_1 +
[ \Delta_2 , \Delta_3 ] = \mbox{span}_{\mathcal{C}^{\infty}(U)}\{
X \in \mathfrak{X}(U) \; | \; X=X^1+[X^2,X^3], \mbox{ where } X^i \in
\Delta_i, 1 \leq i \leq 3 \}$. If $V \subset U$ is open, then $(\Delta_2 +
\Delta_3)|V  = \Delta_2|V + \Delta_3|V$ and
$(\Delta_1+[\Delta_2,\Delta_3])|V  = \Delta_1|V + [\Delta_2|V,\Delta_3|V]$.

Let $\Delta$ be a distribution on $U$.
Define, for each $k \in \mathbb{N}$, the distributions
$F_0 = G_0 = \Delta$, $F_{k+1} = F_k + [F_k, F_0] \supset F_k$, $G_{k+1} =
G_k + [G_k , G_k] \supset G_k \supset F_k$.
The sequences of distributions $(F_k)$ and $(G_k)$ are called the \emph{Lie
flag} and the \emph{derived flag} of $\Delta$, respectively \cite{Res01}.

The next result, which is asserted in \cite{Mur95} and \cite{Sas99}, follows by
induction and the usual properties of the Lie
bracket.

%% Controllability Lie Bracket Distributions - Lie Bracket
\begin{proposition}\label{clbd-lb}
Let $(F_k)$ and $(G_k)$ be the Lie and derived
flags of $\Delta=\mbox{span}_{\mathcal{C}^\infty(U)}\{X^1,
X^2\}$, respectively, where $X^1, X^2 \in
\mathfrak{X}(U)$. Take $P_0 = Q_0 = \{ X^1, X^2 \}$ and consider, for each
$k \in \mathbb{N}$,
\begin{align*}
    & P_{k+1} = \{ X \in \mathfrak{X}(U) \; | \;
X=[Y^{k+2},[Y^{k+1},[\dots,[Y^2,Y^1]
\dots ]], \\ & \hspace{106pt} \mbox{ where } Y^1, \dots, Y^{k+2} \in P_0
\}, \\
    & Q_{k+1} = \{ X \in \mathfrak{X}(U) \; | \; X=[Y^1,Y^2],
    \mbox{ where } Y^1, Y^2 \in \cup_{j=0}^k Q_j \} \subset Q_{k+2}.
\end{align*}
Then, $F_0 = G_0 =
\Delta$ and, for each $k \in \mathbb{N}$,
\begin{align*}
    F_{k+1} &= \mbox{span}_{\mathcal{C}^\infty(U)}(\cup_{j=0}^{k+1} P_{j}) =
F_k + \mbox{span}_{\mathcal{C}^\infty(U)}(P_{k+1}), \\
    G_{k+1} &= G_0 + \mbox{span}_{\mathcal{C}^\infty(U)}(Q_{k+1}) = G_k +
\mbox{span}_{\mathcal{C}^\infty(U)}(Q_{k+1}).
\end{align*}
\end{proposition}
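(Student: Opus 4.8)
The plan is to prove both systems of identities by induction on $k$, treating the Lie flag and the derived flag separately; in each case the engine is the Leibniz rule for the Lie bracket, which lets one absorb smooth-function coefficients into lower-order terms. First I would record the elementary observation that if $\Delta_1 = \mbox{span}_{\mathcal{C}^\infty(U)}(S_1)$ and $\Delta_2 = \mbox{span}_{\mathcal{C}^\infty(U)}(S_2)$ are distributions on $U$ (with $S_1, S_2 \subset \mathfrak{X}(U)$), then
\[
\Delta_1 + \Delta_2 + [\Delta_1,\Delta_2] = \Delta_1 + \Delta_2 + \mbox{span}_{\mathcal{C}^\infty(U)}\{\,[A,B] \; | \; A \in S_1,\ B \in S_2\,\},
\]
since for $f,g \in \mathcal{C}^\infty(U)$ one has $[fA,gB] = fg\,[A,B] + f(L_A g)B - g(L_B f)A$, so each generator $[fA,gB]$ of $[\Delta_1,\Delta_2]$ lies in $\mbox{span}_{\mathcal{C}^\infty(U)}\{[A,B]\} + \Delta_1 + \Delta_2$, while the reverse inclusion is immediate. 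This is a purely algebraic statement about submodules and needs no regularity of the distributions involved.

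For the Lie flag I would show by induction that $F_k = \mbox{span}_{\mathcal{C}^\infty(U)}(\bigcup_{j=0}^{k} P_j)$ for all $k \in \mathbb{N}$; the case $k=0$ is the definition $F_0 = \Delta = \mbox{span}_{\mathcal{C}^\infty(U)}(P_0)$. Assuming it for $k$, write $F_{k+1} = F_k + [F_k,F_0]$ and note $F_0 = \Delta \subseteq F_k$, so applying the observation with $S_1 = \bigcup_{j=0}^{k} P_j$ (a generating set for $F_k$) and $S_2 = \{X^1,X^2\} = P_0$ yields $F_{k+1} = F_k + \mbox{span}_{\mathcal{C}^\infty(U)}\{\,[A,X^i] \; | \; A \in \bigcup_{j=0}^{k} P_j,\ i=1,2\,\}$. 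Now if $A \in P_j$ then $[A,X^i] = -[X^i,A]$ is, up to sign, exactly an element of $P_{j+1}$, and conversely every element of $P_{j+1}$ arises in this way; for $j \le k-1$ such a bracket already lies in $\mbox{span}_{\mathcal{C}^\infty(U)}(\bigcup_{l=0}^{k} P_l) = F_k$, whereas for $j = k$ these brackets run over all of $P_{k+1}$. Hence $F_{k+1} = F_k + \mbox{span}_{\mathcal{C}^\infty(U)}(P_{k+1}) = \mbox{span}_{\mathcal{C}^\infty(U)}(\bigcup_{j=0}^{k+1} P_j)$, which are exactly the two expressions claimed in the proposition.

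For the derived flag I would likewise prove by induction that $G_k = \mbox{span}_{\mathcal{C}^\infty(U)}(\bigcup_{j=0}^{k} Q_j)$, the case $k=0$ again being the definition. For the step, $G_{k+1} = G_k + [G_k,G_k]$; since $\bigcup_{j=0}^{k} Q_j$ generates $G_k$ by hypothesis, the observation (now with $\Delta_1 = \Delta_2 = G_k$) gives $G_{k+1} = G_k + \mbox{span}_{\mathcal{C}^\infty(U)}\{\,[A,B] \; | \; A,B \in \bigcup_{j=0}^{k} Q_j\,\} = G_k + \mbox{span}_{\mathcal{C}^\infty(U)}(Q_{k+1})$, the last equality being precisely the definition of $Q_{k+1}$ together with the trivial inclusion $Q_{k+1} \subseteq [G_k,G_k]$. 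Finally, because $Q_0 = \{X^1,X^2\} \subseteq \bigcup_{j=0}^{k} Q_j$ one has $Q_l \subseteq Q_{k+1}$ for every $1 \le l \le k+1$ (directly from the definitions), so $\mbox{span}_{\mathcal{C}^\infty(U)}(Q_{k+1})$ already contains $\mbox{span}_{\mathcal{C}^\infty(U)}(Q_l)$ for all such $l$; this yields the three equivalent forms $G_{k+1} = G_0 + \mbox{span}_{\mathcal{C}^\infty(U)}(Q_{k+1}) = G_k + \mbox{span}_{\mathcal{C}^\infty(U)}(Q_{k+1}) = \mbox{span}_{\mathcal{C}^\infty(U)}(\bigcup_{j=0}^{k+1} Q_j)$ asserted in the statement.

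The computations are routine; the only points deserving care are (i) justifying the Leibniz reduction at the level of $\mathcal{C}^\infty(U)$-submodules rather than for pointwise distributions --- handled by the displayed observation --- and (ii) the index bookkeeping, namely checking that bracketing the generators of $F_k$ (resp.\ $G_k$) against the original vector fields produces exactly the new generating family $P_{k+1}$ (resp.\ $Q_{k+1}$) and nothing lying strictly outside $F_{k+1}$ (resp.\ $G_{k+1}$).
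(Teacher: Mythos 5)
Your proof is correct and follows exactly the route the paper intends: the paper gives no written proof, stating only that the result ``follows by induction and the usual properties of the Lie bracket,'' and your induction with the Leibniz identity $[fA,gB]=fg[A,B]+f(L_Ag)B-g(L_Bf)A$ to absorb coefficients into lower-order terms is precisely that argument, with the index bookkeeping for $P_{k+1}$ and $Q_{k+1}$ carried out correctly.
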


\begin{remark}\label{flags}
It is clear that $F_0=G_0=\mbox{span}\{X^1,X^2\}$, $F_1=G_1=\mbox{span}\{X^1,X^2,$\linebreak$[X^1,X^2]\}$, $F_2=G_2=\mbox{span}\{X^1,X^2,[X^1,X^2],[X^1,[X^1,X^2]],[X^2,[X^1,X^2]]\}$. However, $F_k$ and $G_k$ are not necessarily identical when $k \geq 3$.
\end{remark}

\begin{remark}\label{lbd-gp-r}
Consider system \eqref{cs} and take $X^1=g_1, X^2=g_2$. Let
$(F_k)$ and $(G_k)$ be the Lie and derived flags of
$\Delta=\mbox{span}_{\mathcal{C}^\infty(U)}\{g_1,
g_2\}$, respectively.
It is immediately verified by induction that $(F_k)$  and $(G_k)$ are invariant under regular static state feedbacks. More precisely,
suppose
that $u=\alpha + \beta v$ is such a feedback defined on
$U$. Let $\widehat{\Delta}=\mbox{span}_{\mathcal{C}^{\infty}(U)}\{\widehat{g}_1,
\widehat{g}_2\}$, where $\widehat{g}_i=\beta_{i1} g_1 + \beta_{i2}g_2$, $i=1,2$,
and let
$(\widehat{F}_k)$ and $(\widehat{G}_k)$ be the Lie and derived flags of
$\widehat{\Delta}$, respectively. Then,
$F_k=\widehat{F}_k$, $G_k=\widehat{G}_k$, for $k \in
\mathbb{N}$.
\end{remark}

Let $p \in M$ and $X_p \in T_p M$. The \emph{interior product}
with
$X_p$ is the map $i_{X_p}$:~$\Omega_{p}^{r} M \rightarrow
\Omega_{p}^{r-1} M$ defined as
$(i_{X_p} \omega_p)(Y_p^1, \dots, Y_p^{r-1})=\omega_p(X_p, Y_p^1, \dots,
Y_p^{r-1})$, for all $\omega_p \in \Omega_{p}^{r} M$ and
all $Y_p^1, \dots, Y_p^{r-1} \in T_p M$, where $r \in \mathbb{N}$,
$\Omega_{p}^{-1} M \triangleq \{ 0 \}$ and
$i_{X_p} \omega_p \triangleq 0$, for all $\omega_p \in \Omega_{p}^{0}
M=\mathbb{R}$. Extending it by linearity, one obtains the
mapping $i_{X_p}$:~$\Omega_{p} M \rightarrow \Omega_{p} M$.
One also denotes $i_{X_p}\omega_p$ as $X_p \, \lrcorner
\, \omega_p$. The map $i_{X_p}$ is linear, satisfies $i_{X_p} \circ
i_{X_p}=0$ and is an antiderivation, that is, if $\theta_p
\in \Omega_{p}^{r}M$ and $\sigma_p \in \Omega_{p}^{s}M$, then
$X_p \, \lrcorner \, (\theta_p \wedge \sigma_p)=(X_p \, \lrcorner \,
\theta_p) \wedge \sigma_p + (-1)^{r}\theta_p \wedge (X_p \, \lrcorner \,
\sigma_p)$ \cite{Sas99}. For the purposes of this work, it
suffices to consider the definition
of associated and retracting
spaces given in the sequel. For a more general notion (but equivalent in
the present context) concerning
(homogeneous) algebraic ideals, see \cite{Yang92}, \cite{Sas99}, \cite{Dieu74-4}.
Let $\Lambda=\mbox{span}_{\mathcal{C}^{\infty}(U)} \{
\lambda^1, \dots, \lambda^m \}$ be a codistribution on $U$. One defines the
\emph{associated space} or \emph{Cauchy
characteristic space} determined by $\Lambda$ at $p \in U$ as
\begin{align*}
    A(\Lambda)_p=\{ X_p \in T_p M \; | &  \; X_p \, \lrcorner \, d \lambda^i_p
\in \Lambda_p=\mbox{span}_{\mathbb{R}} \{ \lambda^1_p, \dots, \lambda^m_p \} \\
& \mbox{ and } \langle \lambda^i_p, X_p \rangle = 0, \mbox{ for
} 1 \leq i \leq m \},
\end{align*}
and the \emph{retracting space} determined by $\Lambda$ at $p \in U$
as $C(\Lambda)_p=(A(\Lambda)_p)^{\perp}$.
Note that $A(\Lambda)_p$ and $C(\Lambda)_p$
are subspaces of $T_p M$ and $T_p^{*} M$, respectively.
Furthermore, given an open set $V
\subset U$ and $p \in V$, one has $
A(\Lambda)_p=A(\Lambda|V)_p$.

%% Preliminary Results - Triangular Forms
\section{Known Results for Driftless Systems}\label{pr-tf}

Theorem~\ref{cfec} and Theorem~\ref{tf-dls} given below are the well-known
results for driftless systems with two inputs mentioned
in the introduction. They will be used in the results established in the
next section. Note that Theorem~\ref{tf-dls} is a generic result.

%% Chained Form Equivalent Conditions
\begin{theorem}\cite{Mur95}\label{cfec}
Consider system (\ref{cs}) with $f=0$.
Define $\Delta=\mbox{span}_{\mathcal{C}^{\infty}(U)}\{g_1,
g_2\}$ and $\Lambda = \Delta^\perp$. Let $(F_k)$ and $(G_k)$ be the Lie and
derived flags of $\Delta$, respectively. Given $p \in U$, the following
assertions are equivalent:
\begin{enumerate}[1)]
    \item For each $0 \leq k \leq n-2$, $p \in U$ is a regular point of $F_k,
G_k$ with $\dim(F_k(p)) = \dim(G_k(p)) = 2 + k$;
    \item There exists a change of coordinates $z=\varphi(x)$ and a
regular static state feedback $u=\beta v$, both of them defined on
an open neighborhood $V \subset U$ of $p$, in which the expression of the
resulting closed-loop system
in the coordinates $z=(z_1, \dots, z_n)$ is given by \eqref{tf-dlt}.
\end{enumerate}
\end{theorem}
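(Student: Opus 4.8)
The plan is to prove the two implications separately; $2)\Rightarrow1)$ is a routine verification, and $1)\Rightarrow2)$ is the substantial one. For $2)\Rightarrow1)$, suppose the closed loop has the form \eqref{tf-dlt} on $V$ in coordinates $z=\varphi(x)$. Its control vector fields are $X_1=z_2\partial_{z_1}+z_3\partial_{z_2}+\cdots+z_{n-1}\partial_{z_{n-2}}+\partial_{z_n}$ and $X_2=\partial_{z_{n-1}}$, and by Remark~\ref{lbd-gp-r} (feedback invariance) together with diffeomorphism invariance, the Lie and derived flags of $\mathrm{span}_{\mathcal{C}^{\infty}(V)}\{X_1,X_2\}$ are the $\varphi$-images of the corresponding flags of $\Delta$ restricted to a neighborhood of $p$. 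A direct computation gives $\mathrm{ad}_{X_1}^{\,k}X_2=(-1)^k\partial_{z_{n-1-k}}$ for $0\le k\le n-2$, so Proposition~\ref{clbd-lb} yields $F_k=G_k=\mathrm{span}_{\mathcal{C}^{\infty}(V)}\{X_1,\partial_{z_{n-1}},\dots,\partial_{z_{n-1-k}}\}$, which is regular of dimension $2+k$ on $V$; transferring this back to $\Delta$ gives 1).

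\emph{Reduction of $1)\Rightarrow2)$.} The inclusions $F_k\subseteq G_k$ together with the equality of their constant ranks on a neighborhood $V$ of $p$ force $F_k=G_k$ on $V$ for $0\le k\le n-2$, so $G_{n-2}|V=\mathfrak{X}(V)$. Put $\Lambda_k=G_k^{\perp}$, a regular codistribution on $V$ of rank $n-2-k$, so $\Lambda=\Lambda_0\supset\Lambda_1\supset\cdots\supset\Lambda_{n-2}=\{0\}$. Using $F_k=G_k$, a standard dual computation identifies $\Lambda_{k+1}$ with the derived Pfaffian system of $\Lambda_k$, namely the $\omega\in\Lambda_k$ with $d\omega\equiv0$ modulo $\Lambda_k$; hence $\Lambda$ is a Goursat flag near $p$. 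I claim that 2) is equivalent to the local Goursat normal form: there exist coordinates $(z_1,\dots,z_n)$ near $p$ with $\Lambda=\mathrm{span}\{dz_1-z_2\,dz_n,\dots,dz_{n-2}-z_{n-1}\,dz_n\}$. Indeed, then $\Delta=\Lambda^{\perp}=\mathrm{span}\{X_1,X_2\}$ with $X_1,X_2$ as in the first paragraph, and the regular feedback $u=\beta v$ carrying the given frame $(g_1,g_2)$ of $\Delta$ onto $(X_1,X_2)$ turns \eqref{cs} with $f=0$ into \eqref{tf-dlt}.

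\emph{Goursat normal form by induction on $n$.} For $n=2$, $\Lambda=\{0\}$ and $\Delta|V=\mathfrak{X}(V)$, so any coordinates work after a feedback; for $n=3$, $\Lambda=\mathrm{span}\{\omega\}$ with $\omega\wedge d\omega$ nonvanishing at $p$ (since $\Lambda_1=\{0\}$), and the classical Pfaff normal form gives $\omega=\mu\,(dz_1-z_2\,dz_3)$ with $\mu$ nonvanishing. For $n\ge4$ one first shows — and this is where the growth hypothesis and regularity are used — that the associated (Cauchy characteristic) space $A(\Lambda_1)$ is regular of dimension exactly $1$ near $p$. Being a Cauchy characteristic space, $A(\Lambda_1)$ is integrable, so near $p$ it defines a foliation and the quotient is a submersion $\pi$ onto an $(n-1)$-manifold $\bar M$; moreover $\Lambda_1$ is semibasic and invariant for this foliation (the latter by the very definition of $A(\Lambda_1)$), hence descends to a corank-$2$ codistribution $\bar\Lambda$ on $\bar M$ with $\pi^{*}\bar\Lambda=\Lambda_1$. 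Since $\pi^{*}$ commutes with $d$ and with the derived-system construction, the $k$-th derived system of $\bar\Lambda$ pulls back to $\Lambda_{1+k}$, so $\bar\Lambda$ has ranks dropping by $1$ along its derived flag down to $\{0\}$, i.e.\ $\bar\Lambda$ satisfies the Goursat conditions on $\bar M$; the induction hypothesis then provides coordinates $(\bar z_1,\dots,\bar z_{n-1})$ on $\bar M$ in which $\bar\Lambda$ is in normal form. Setting $z_i=\bar z_i\circ\pi$ for $1\le i\le n-2$ and $z_n=\bar z_{n-1}\circ\pi$ (these $n-1$ functions are independent as $\pi$ is a submersion) and completing to a coordinate system by some $z_{n-1}$, one checks that $\Lambda$ is the Cartan prolongation of $\bar\Lambda$ and that the prolongation of the Goursat normal form in $n-1$ variables is the Goursat normal form in $n$ variables; a final rescaling of the frame, absorbed into $\beta$ (which by Remark~\ref{lbd-gp-r} leaves the flags unchanged), yields \eqref{tf-dlt}.

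\emph{Main obstacle.} The crux is the inductive step. The claim $\dim A(\Lambda_1)\equiv1$ near $p$ fails for rank-$2$ distributions whose flags grow faster than the Goursat rate and at Goursat-singular points, and the growth hypothesis is precisely what rules these out; one must also verify that $\Lambda$ is genuinely the prolongation of the reduced system $\bar\Lambda$, so that the inductive normal form lifts. A secondary, purely bookkeeping difficulty is to track the successive frame adjustments through the reductions so that in the end the last two equations read exactly $\dot z_{n-1}=v_2$, $\dot z_n=v_1$; these adjustments compose into a single regular $\beta$. Alternatively, one may quote the Goursat normal form theorem directly from the exterior differential systems literature and carry out only this last translation into \eqref{tf-dlt}.
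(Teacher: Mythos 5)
This theorem is not proved in the paper: it is quoted from \cite{Mur95} and used as a black box, so the only meaningful comparison is with Murray's original argument. Your overall architecture --- dualize to the Pfaffian systems $\Lambda_k=G_k^{\perp}$, identify the derived flag of $\Delta$ with the derived flag of the Pfaffian system, and obtain the Goursat normal form by quotienting along the Cauchy characteristics of the first derived system and inducting on $n$ --- is the standard exterior-differential-systems route and is essentially the one underlying \cite{Mur95}. The direction $2)\Rightarrow1)$ is correct and coincides with what the paper itself establishes as Lemma~\ref{lbd-cb}.

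The inductive step of $1)\Rightarrow2)$, however, has a genuine gap: you have located the use of the Lie-flag hypothesis $\dim(F_k(p))=2+k$ in the wrong place, and as a result your argument never actually uses it. The claim $\dim A(\Lambda_1)=1$ is \emph{not} where that hypothesis enters: for every rank-two distribution whose derived flag grows by one at each step, the Cauchy characteristic space of the first derived system has dimension exactly one (the ``sandwich lemma'' for Goursat flags), including at the Kumpera--Ruiz singular points where the chained form fails. Concretely, on $\mathbb{R}^5$ take $\Lambda=\mathrm{span}\{dx_2-x_3\,dx_1,\;dx_3-x_4\,dx_1,\;dx_1-x_5\,dx_4\}$: one checks that $\dim(G_k)=2+k$ for $0\le k\le 3$ everywhere and that $A(\Lambda_1)=\mathrm{span}\{\partial/\partial x_5\}$ has dimension one everywhere, yet at $x_5=0$ one has $\dim(F_3)=4\neq5$ and the distribution is not equivalent to the chained form there. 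Your induction, as written, applies verbatim to this example at $x_5=0$ and would therefore ``prove'' a false statement. The hypothesis on the $F_k$ must be invoked (i) to show that the reduced system $\bar\Lambda$ satisfies the \emph{full} induction hypothesis --- you verify only its derived flag, whereas the inductive statement also needs its Lie flag --- and (ii) in the lifting step, to show that $\Lambda$ sits in the \emph{regular} (non-vertical) part of the prolongation of $\bar\Lambda$, i.e.\ that the extra generator can be written as $dz_{n-2}-z_{n-1}\,dz_n$ rather than, as in the example above, $dz_n-z_{n-1}\,dz_{n-2}$; your assertion that ``the prolongation of the Goursat normal form in $n-1$ variables is the Goursat normal form in $n$ variables'' is false precisely at such vertical points. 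Equivalently, in the congruence formulation used in \cite{Mur95}, the Lie-flag condition is what produces a single $1$-form $\pi$ with $\pi_p\neq0$ valid for all levels of the flag simultaneously; the fallback of ``quoting the Goursat normal form theorem directly'' does not avoid this, since that theorem assumes the congruences, and deriving them from condition~1) is exactly the nontrivial content here.
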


%% Triangular Form - DriftLess System
\begin{theorem}\cite{MarRou94}\label{tf-dls}
Consider system (\ref{cs}) with $f=0$.
Let $(G_k)$ be the derived flag of
$\Delta=\mbox{span}_{\mathcal{C}^{\infty}(U)}\{g_1, g_2\}$
and suppose that $\dim(G_k(q)) = 2 + k$, for all $0 \leq k \leq n-2$, $q \in D$,
where $D \subset U$ is an open dense set in $U$. Then, there exists an open
and dense set $V$ in $U$ such that, for every $p \in V$, property 2) above
holds.
In particular, for every $p \in V \subset U$, property 1) is true.
\end{theorem}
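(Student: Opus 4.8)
The plan is to deduce Theorem~\ref{tf-dls} from Theorem~\ref{cfec} by promoting the hypothesis on the derived flag to the full joint regularity condition in item~1) of that theorem, on a suitable open dense set. Since $F_k \subseteq G_k$ for every $k$, the hypothesis $\dim(G_k(q)) = 2+k$ on $D$ already forces $\dim(F_k(q)) \le 2+k$ there; and by Remark~\ref{flags}, $F_k = G_k$ whenever $k \le 2$, so $\dim(F_k) \equiv 2+k$ on $D$ is automatic for those indices. Hence it suffices to produce an open dense $V \subseteq D$ (which is then open dense in $U$, as $D$ is) on which $\dim(F_k) = 2+k$ also for $3 \le k \le n-2$. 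On such a $V$ every distribution $F_k, G_k$ with $0 \le k \le n-2$ has locally constant rank $2+k$, so every point of $V$ is a regular point and item~1) of Theorem~\ref{cfec} holds there; item~2) then follows at each $p \in V$, which is exactly the assertion.

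To build $V$, I would first record that each rank function $p \mapsto \dim(F_k(p))$ is lower semicontinuous, being the rank of a matrix whose columns are the smooth generators of $F_k$ furnished by Proposition~\ref{clbd-lb}. For a lower semicontinuous, integer-valued function bounded by $n$ on the open set $D$, the set of points at which it is locally constant is open and dense in $D$ (on any nonempty open $O \subseteq D$ the function attains its maximum over $O$ on a nonempty open subset of $O$, where it is locally constant). Intersecting these finitely many open dense sets over $k = 0, \dots, n-2$, one gets an open dense $W \subseteq D$ on which all of $\dim(F_0), \dots, \dim(F_{n-2})$ are locally constant. The one genuinely technical ingredient needed is the identity $\bigcup_k F_k = \bigcup_k G_k$: both unions equal the smallest involutive distribution containing $\Delta$, which follows from the Jacobi identity (it amounts to checking that left-normed iterated brackets of $g_1, g_2$ generate the involutive closure and that the $\mathcal{C}^\infty$-module bracket operations do not enlarge the relevant spans). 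Establishing this identity cleanly is the main obstacle; the rest is soft.

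It remains to show $\dim(F_k) \equiv 2+k$ on $W$ for $0 \le k \le n-2$, which I would check pointwise. Fix $p \in W$ and a connected open neighborhood $W' \subseteq W$ of $p$ on which every $\dim(F_k)$ is constant, say equal to $d_k$; then $d_0 = 2$ (as $\Delta$ has rank $2$ on $D$) and $d_0 \le d_1 \le \dots \le d_{n-2}$. Suppose, for contradiction, that $d_{j_0} = d_{j_0+1}$ for some $j_0 \le n-3$, with $j_0$ minimal. Then $d_0 < d_1 < \dots < d_{j_0}$ strictly, so $d_{j_0} \ge 2+j_0$, and together with $d_{j_0} \le \dim(G_{j_0}) = 2+j_0$ this forces $d_{j_0} = 2+j_0$. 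Moreover $F_{j_0}$ is a regular distribution on $W'$ with $F_{j_0+1} = F_{j_0}$, hence $[F_{j_0}, F_0] \subseteq F_{j_0}$ on $W'$ and therefore $F_l = F_{j_0}$ on $W'$ for all $l \ge j_0$; in particular $\bigcup_l F_l$ has rank $2+j_0 \le n-1$ on $W'$. But the hypothesis gives $\dim(G_{n-2}) = n$ on $D \supseteq W'$, so $\bigcup_l G_l$ has rank $n$ on $W'$, contradicting $\bigcup_l F_l = \bigcup_l G_l$. Hence $d_0 < d_1 < \dots < d_{n-2}$ is strictly increasing, and since $d_0 = 2$ and $d_{n-2} \le n$, necessarily $d_k = 2+k$ for all $0 \le k \le n-2$. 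As $p$ was arbitrary, $\dim(F_k) \equiv 2+k$ on $W$, so $V := W$ has the required property, completing the plan.
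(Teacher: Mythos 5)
Your argument is correct, but it is worth saying up front that the paper offers no proof of this statement at all: Theorem~\ref{tf-dls} is quoted verbatim from \cite{MarRou94}, whose original derivation goes through the Goursat normal form for Pfaffian systems rather than through Murray's pointwise result. What you do instead is reduce the generic statement to Theorem~\ref{cfec} by showing that the Lie-flag half of its condition~1) can be recovered, on an open dense set, from the derived-flag hypothesis alone. The skeleton is sound: lower semicontinuity of the ranks $\dim F_k(\cdot)$ (using the finite generating sets $P_j$ of Proposition~\ref{clbd-lb}) gives an open dense $W\subseteq D$ of simultaneous local constancy; on a connected neighborhood the constant ranks $d_k$ must be strictly increasing, because a repeat $d_{j_0}=d_{j_0+1}$ forces $F_{j_0+1}|W'=F_{j_0}|W'$ (pointwise equality of regular distributions of equal rank upgrades to module equality via a local frame), hence stabilization of the whole Lie flag at rank $2+j_0\le n-1$, contradicting $\dim G_{n-2}=n$ once one knows $\bigcup_k F_k=\bigcup_k G_k$; and strict increase from $d_0=2$ to $d_{n-2}\le n$ pins down $d_k=2+k$. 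You correctly isolate the identity $\bigcup_k F_k=\bigcup_k G_k$ as the one genuinely technical ingredient. It does hold, and for the reason you indicate: left-normed iterated brackets span the free Lie algebra over $\mathbb{Z}$ (an induction on degree via Jacobi and antisymmetry), so every iterated bracket of $g_1,g_2$ lies in some $F_k$, and the extra terms produced by bracketing $\mathcal{C}^\infty$-combinations ($[\alpha X,\beta Y]=\alpha\beta[X,Y]+\alpha(L_X\beta)Y-\beta(L_Y\alpha)X$) stay inside the spans already accounted for, whence $G_k\subseteq\bigcup_j F_j$ by induction; the reverse inclusion is $F_k\subseteq G_k$. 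This is a standard fact in the nonholonomic-systems literature and should be stated with a reference or proved in full, but it is not a gap in substance. Net effect: your route buys a self-contained derivation of the generic theorem from the pointwise one stated in this paper, at the cost of importing the free-Lie-algebra spanning lemma, whereas the cited route of \cite{MarRou94} works directly with the codistribution $\Delta^\perp$ and genericity of the Goursat conditions.
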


%% Necessary and Sufficient Conditions - Triangular Forms
\section{Main Result}\label{nsc-tf}

The proof of Theorem~\ref{tf-nsc}, which is the main result of this paper, relies on the lemmas given below.

%% Retracting Space - Canonical Basis
\begin{lemma}\label{rs-cb}
Let $M$ be a manifold with $\dim(M) = n \geq 4$.
Assume that $z=(z_1, \dots, z_n)$ is a local chart of
$M$ defined on an open set $U$. Consider the following codistributions on
$U$
\begin{align*}
    \Lambda^{k}=\mbox{span}_{\mathcal{C}^\infty(U)} \{dz_1-z_2dz_n, \dots,
dz_{n-2-k}-z_{n-1-k}dz_n \},
\end{align*}
for each $1 \leq k \leq n-3$. Then, $C^k_{q}=\mbox{span}_{\mathbb{R}} \{
dz_1, \dots, dz_{n-1-k}, dz_n
\}|_{q} \supset \Lambda^{k}_q$,
for all $q \in U$, $1 \leq k \leq n-3$, where
$C^k_{q} \triangleq C(\Lambda^k)_{q}$
is the retracting space determined by $\Lambda^k$ at
$q \in U$.
\end{lemma}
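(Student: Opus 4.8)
The plan is to compute the Cauchy characteristic space $A(\Lambda^k)_q$ explicitly in the chart $z$ and then dualize, since $C^k_q=(A(\Lambda^k)_q)^\perp$ by definition. Abbreviate the generators of $\Lambda^k$ by $\omega_j=dz_j-z_{j+1}\,dz_n$, for $1\le j\le n-2-k$, so that $d\omega_j=-\,dz_{j+1}\wedge dz_n$. Writing a tangent vector at $q$ as $X_q=\sum_{i=1}^n a_i\,\partial/\partial z_i|_q$, I would first record, evaluated at $q$,
\[
  \langle \omega_j,X_q\rangle=a_j-z_{j+1}(q)\,a_n,\qquad
  X_q\lrcorner\, d\omega_j=a_n\,dz_{j+1}-a_{j+1}\,dz_n,
\]
together with the description of the fibre: every element of $\Lambda^k_q$ is of the form $\sum_{l=1}^{n-2-k}c_l\,dz_l-\bigl(\sum_l c_l z_{l+1}(q)\bigr)\,dz_n$, hence it only involves $dz_1,\dots,dz_{n-2-k}$ and $dz_n$. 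Using the hypothesis $1\le k\le n-3$ one has $1\le n-2-k<n-1-k<n$, so $\Lambda^k_q$ has no $dz_{n-1-k}$-component, and the only scalar multiple of $dz_n$ it contains is $0$ (comparing $dz_l$-coefficients forces all $c_l=0$).

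The key step is to examine the membership condition $X_q\lrcorner\,d\omega_j\in\Lambda^k_q$ at the top index $j=n-2-k$: there $X_q\lrcorner\,d\omega_{n-2-k}=a_n\,dz_{n-1-k}-a_{n-1-k}\,dz_n$, and by the remarks above this lies in $\Lambda^k_q$ only if $a_n=0$ and $a_{n-1-k}=0$. Substituting $a_n=0$ into the annihilation conditions $\langle\omega_j,X_q\rangle=0$ then forces $a_j=0$ for all $1\le j\le n-2-k$. Conversely, any $X_q$ with $a_1=\dots=a_{n-1-k}=0$ and $a_n=0$ satisfies every defining relation of $A(\Lambda^k)_q$: the annihilation relations are immediate, and $X_q\lrcorner\,d\omega_j=-a_{j+1}\,dz_n=0$ since $j+1\le n-1-k$. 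Hence $A(\Lambda^k)_q=\mbox{span}_{\mathbb{R}}\{\partial/\partial z_{n-k},\dots,\partial/\partial z_{n-1}\}|_q$, a $k$-dimensional subspace of $T_qM$ (which, incidentally, does not depend on $q$).

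Taking annihilators then yields $C^k_q=(A(\Lambda^k)_q)^\perp=\mbox{span}_{\mathbb{R}}\{dz_1,\dots,dz_{n-1-k},dz_n\}|_q$, which is the asserted equality; and the inclusion $\Lambda^k_q\subset C^k_q$ is clear because each generator $dz_j-z_{j+1}(q)dz_n$ with $j\le n-2-k\le n-1-k$ is a linear combination of $dz_j$ and $dz_n$, both of which lie in $C^k_q$. I do not expect a genuine obstacle: the whole argument is a direct pointwise computation once the fibre $\Lambda^k_q$ is correctly described. The only thing requiring care is the index bookkeeping --- the crux is that $z_{n-1-k},\dots,z_{n-1}$ are precisely the coordinates absent from the leading terms of $\Lambda^k$, which is exactly what makes the top-index interior-product condition collapse $A(\Lambda^k)_q$ --- together with a check of the extreme cases $k=1$ and $k=n-3$, where the hypothesis $n\ge 4$ (nonemptiness of the range of $k$) enters.
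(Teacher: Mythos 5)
Your proposal is correct and follows essentially the same route as the paper: both compute the Cauchy characteristic space $A(\Lambda^k)_q$ explicitly in the chart, finding it to be $\mbox{span}_{\mathbb{R}}\{\partial/\partial z_{n-k},\dots,\partial/\partial z_{n-1}\}|_q$, and then pass to the annihilator to obtain $C^k_q$. The key step --- that the interior-product condition at the top index forces $a_n=a_{n-1-k}=0$, after which the annihilation relations kill $a_1,\dots,a_{n-2-k}$ --- is exactly the computation carried out (more tersely) in the paper's proof.
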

\begin{proof}
Let $q \in U$, $1 \leq k \leq n-3$ and $A^k_{q}  =
A(\Lambda^k)_{q}$. Then $S_q = \{ \partial / \partial
z_{n-1-k}, \dots,$ $\partial / \partial z_{n-1},
\gamma \}|_q$ is a basis of $(\Lambda^k_q)^\perp$, where $\gamma = z_2 \partial
/ \partial
z_{1} + \dots + z_{n-1-k} \partial /
\partial z_{n-2-k} + \partial / \partial z_{n}$.
Let $X_q \in T_{q}(M)$. Then, $X_q \in A^k_q$ if and only
if $X_q \in \mbox{span}_{\mathbb{R}} (S_q)$ and $X_q \, \lrcorner \, (dz_j
\wedge dz_n)|_q \in \Lambda^k_q$ for $2 \leq j \leq n-1-k$, which in turn is
equivalent to $X_q \in \mbox{span}_{\mathbb{R}} \{\partial /
\partial z_{n-k}, \dots, \partial /
\partial z_{n-1} \}|_q$. This completes the proof.
\end{proof}

\begin{lemma}\label{lbd-cb}
Let $U \subset M$ be open with $\dim(M) = n \geq 3$. Consider that $z=(z_1, \dots, z_n)$ is a local chart defined on $U$ such that
$X^1, X^2 \in \mathfrak{X}(U)$ are respectively described as $X^1(z) = (z_2, \dots, z_{n-1}, 0 , 1)$, $X^2(z) = (0 , \dots, 0, 1, 0)$
(note that $X^2=\partial / \partial z_{n-1}$). Let $(F_k)$ and $(G_k)$ be the Lie and derived flags of $\Delta=\mbox{span}_{\mathcal{C}^\infty(U)} \{ X^1, X^2 \}$, respectively.
Then, $F_{k} = G_{k} = \mbox{span}_{\mathcal{C}^\infty(U)} \{ X^1, \partial / \partial z_{n-1},$ \linebreak $\dots, \partial / \partial z_{n-1-k} \}$  and $\dim(F_k) = \dim(G_k) = 2 + k$, for $0 \leq k \leq n-2$.
\end{lemma}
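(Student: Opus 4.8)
The plan is to compute the Lie flag directly using the explicit coordinate expressions for $X^1$ and $X^2$ and then to observe that the successive brackets only ever introduce the coordinate vector fields $\partial/\partial z_{n-1}, \partial/\partial z_{n-2}, \dots$, so that the derived flag never outgrows the Lie flag. First I would compute $[X^1, X^2]$. Since $X^2 = \partial/\partial z_{n-1}$ and $X^1(z) = (z_2, \dots, z_{n-1}, 0, 1)$, only the component $z_{n-1}$ of $X^1$ (sitting in the $(n-2)$-nd slot) depends on $z_{n-1}$, so $[X^1, \partial/\partial z_{n-1}] = -\partial X^1/\partial z_{n-1} = -\partial/\partial z_{n-2}$. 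Thus $F_1 = G_1 = \operatorname{span}\{X^1, \partial/\partial z_{n-1}, \partial/\partial z_{n-2}\}$, which is $3$-dimensional, matching the claimed formula for $k=1$.

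Next I would set up an induction. Suppose $F_k = G_k = \operatorname{span}_{\mathcal{C}^\infty(U)}\{X^1, \partial/\partial z_{n-1}, \dots, \partial/\partial z_{n-1-k}\}$ for some $1 \le k \le n-3$. To obtain $F_{k+1} = F_k + [F_k, F_0]$ it suffices, by $\mathcal{C}^\infty(U)$-bilinearity of the bracket modulo $F_k$, to bracket the single new generator $\partial/\partial z_{n-1-k}$ against the two generators $X^1, X^2$ of $F_0$. Now $[\partial/\partial z_{n-1-k}, X^2] = 0$ since both are coordinate fields, and $[X^1, \partial/\partial z_{n-1-k}] = -\partial X^1/\partial z_{n-1-k}$; since the only component of $X^1$ depending on $z_{n-1-k}$ is the one in slot $n-2-k$ (namely $z_{n-1-k}$ itself, for $k \le n-3$, i.e. $n-1-k \ge 2$), this bracket equals $-\partial/\partial z_{n-2-k}$. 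Hence $F_{k+1} = \operatorname{span}\{X^1, \partial/\partial z_{n-1}, \dots, \partial/\partial z_{n-2-k}\}$, giving $\dim(F_{k+1}) = 3 + k$, and the induction closes for $0 \le k \le n-2$. Finally, $G_{k+1} = G_k + [G_k, G_k]$: every bracket of generators of $G_k$ either involves two coordinate fields (giving $0$) or is $[X^1, \partial/\partial z_j]$ for some $n-1-k \le j \le n-1$, which by the same computation is either $0$ (when $j = n-1$ already accounted, or more precisely the bracket lands in the span) or $-\partial/\partial z_{j-1} \in F_{k+1}$; since also $F_k \subset G_k \subset F_k$ forces equality at each stage, we get $G_{k+1} = F_{k+1}$.

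The only mild subtlety — and the point I would state carefully rather than the "main obstacle," since the computation is elementary — is the index bookkeeping: one must check that throughout the range $0 \le k \le n-2$ the index $n-1-k$ stays $\ge 1$ so that the vector fields $\partial/\partial z_{n-1-k}$ are genuine coordinate fields on $U$, and that the component of $X^1$ in slot $n-2-k$ is exactly $z_{n-1-k}$ as long as $n-1-k \ge 2$; the borderline case $k = n-2$ (where the last new field is $\partial/\partial z_1$) needs the component $X^1_1 = z_2$ and gives $[X^1, \partial/\partial z_2] = -\partial/\partial z_1$, consistent with $\dim(F_{n-2}) = n$, i.e. $F_{n-2} = \mathfrak{X}(U)$. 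Assembling the base case, the inductive step, and the identification $G_k = F_k$ completes the proof; no exterior-differential-systems machinery is needed here, only the coordinate formulas for $X^1, X^2$ and the Leibniz rule for the Lie bracket.
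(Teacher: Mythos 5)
Your proof is correct and follows essentially the same route as the paper's: an induction driven by the single bracket identity $[\partial/\partial z_{n-j},X^1]=\partial/\partial z_{n-1-j}$, bilinearity of the bracket modulo the span of the generators, and the sandwich $F_k\subset G_k\subset F_k$ to conclude equality of the two flags. The only organizational difference is that you induct on $F_k$ and absorb $G_{k+1}$ into $F_{k+1}$ along the way, whereas the paper inducts on $G_k$ and identifies $F_k$ at the end; the content is the same.
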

\begin{proof}
One begins by showing by induction on $0 \leq k \leq n-2$ that
%% Lie Bracket Distributions - Canonical Basis - Equality
\begin{align}\label{lbd-cb-e}
    G_{k} = \mbox{span}_{\mathcal{C}^\infty(U)} \{ X^1, \partial / \partial z_{n-1},  \dots, \partial / \partial z_{n-1-k} \}.
\end{align}
Note that $G_{0} = \mbox{span}_{\mathcal{C}^\infty(U)} \{ X^1, \partial / \partial z_{n-1} \}$. Consider the induction hypothesis: $G_{k} = \mbox{span}_{\mathcal{C}^\infty(U)} \{ X^1, \partial / \partial z_{n-1}, \dots, \partial / \partial z_{n-1-k} \}$, where $0 \leq k \leq n-3$. Define $H = G_k + \mbox{span}_{\mathcal{C}^\infty(U)}\{\partial / \partial z_{n-1-(k+1)}\}$. It suffices to prove that $G_{k+1} = H$. By hypothesis, $X^1(z) = (z_2,  \dots, z_{n-1}, 0 , 1)$.
It is easy to see that\footnote{See also \cite[Lemma 11.8]{Sas99}.}
%% Lie Bracket Shift
\begin{equation}\label{lbs}
    [\partial / \partial z_{n-j}, X^1] = \partial / \partial z_{n-1-j} \in F_j \subset G_j, \quad  \mbox{for } 1 \leq j \leq n-2.   % \leq k-1.
\end{equation}
In particular, $\partial / \partial z_{n-1-(k+1)} = \partial / \partial z_{n-2-k} \in G_{k+1}$. Hence, $H \subset G_{k+1}$ because $G_k \subset G_{k+1}$. Now, let $Y = Y^1 + [Y^2, Y^3] \in G_{k+1}$, where $Y^i \in G_k$, for $1 \leq i \leq 3$. One has $Y^i = \alpha_i X^1 + \sum_{j=1}^{k+1} \beta_{ij} \partial / \partial z_{n-j}$, where
$\alpha_i, \beta_{ij} \in \mathcal{C}^{\infty}(U)$, for $1 \leq i \leq 3$. It follows from (\ref{lbs}) that
$[Y^2, Y^3] = [\alpha_1 X^1 + \sum_{j=1}^{k+1} \beta_{1j} \partial / \partial z_{n-j}$, $\alpha_2 X^1 + \sum_{j=1}^{k+1} \beta_{2j} \partial / \partial z_{n-j}] = \lambda X^1 + \sum_{j=1}^{k+1} \gamma_j \partial / \partial z_{n-j} + \sum_{j=1}^{k+1} \delta_j \partial / \partial z_{n-1-j} \in H$, where
$\lambda, \gamma_j, \delta_j \in \mathcal{C}^{\infty}(U)$. Since $Y^1 \in G_k \subset H$, one has $Y \in H$. Thus, $G_{k+1} \subset H$, and hence $G_{k+1}=H$.
Finally, recall that $F_k \subset G_k$, for $0 \leq k \leq n-2$, with $F_0 = G_0=\Delta$. On the other hand, it is immediate from (\ref{lbd-cb-e}) and (\ref{lbs}) that $G_k \subset F_k$, for $1 \leq k \leq n-2$. Therefore, $F_k = G_k$, $0 \leq k \leq n-2$.
\end{proof}

%% Triangular Form - Necessary and Sufficient Conditions
\begin{theorem}\label{tf-nsc}
Consider system (\ref{cs}) and let $(F_k)$ and $(G_k)$ be the Lie and
derived flags of $\Delta =
\mbox{span}_{\mathcal{C}^\infty(U)} \{ g_1,
g_2 \}$, respectively. For each $1 \leq k \leq n-3$, define
$\Lambda^{k}=(G_k)^{\perp}$ and $C^k_{q}=C(\Lambda^k)_{q}$, for $q \in
U$. Let $p \in U$.
Then, there exist a change of coordinates $z=\varphi(x)$
and a regular static state feedback of the form
$u=\alpha + \beta v$, both of them defined on an open neighborhood $V
\subset U$ of $p$, such that the expression of the resulting closed-loop
system in the coordinates $z=(z_1, \dots,
z_n)$ is given by the triangular form \eqref{tf-e} \emph{if and only if} there
exists an open neighborhood $W \subset U$ of $p$ in which the following
geometric conditions are satisfied:
\begin{enumerate}[1)]
    \item $\dim(F_k(q)) = \dim(G_k(q)) = 2 + k$, for $0 \leq k \leq
n-2$,
$q \in W$;
    \item $L_{f}\omega_{q} \in C^k_{q}$, for $1 \leq k \leq n-3$, $q \in W$, $\omega \in \Lambda^k$.
\end{enumerate}
\end{theorem}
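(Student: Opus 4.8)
The plan is to split the argument into the necessity and sufficiency directions, and in both directions to use the driftless result (Theorem~\ref{cfec}) to reduce to the normal form in which $g_1,g_2$ are already in chained form, and then to analyse the drift $f$ by means of the retracting-space condition~2). For the sufficiency direction, assume conditions~1) and 2) hold on $W$. Condition~1) is exactly hypothesis~1) of Theorem~\ref{cfec}, so there is a change of coordinates $z=\varphi(x)$ and a regular feedback $u=\beta v$ on some neighbourhood $V\subset W$ of $p$ bringing the control vector fields to the chained form; that is, $\widehat g_1=\beta_{11}g_1+\beta_{12}g_2$ becomes $X^1(z)=(z_2,\dots,z_{n-1},0,1)$ and $\widehat g_2$ becomes $X^2=\partial/\partial z_{n-1}$ (after possibly relabelling/reordering the $z_i$ as in Lemma~\ref{lbd-cb}). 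In these coordinates I must produce a feedback $u=\alpha+\beta v$ killing the ``wrong'' components of the drift. The natural candidate, as announced in the introduction, is $\alpha=\beta\overline\alpha$ with $\overline\alpha=-(\langle dz_n,f\rangle,\langle dz_{n-1},f\rangle)$: this forces the $\dot z_{n-1}$ and $\dot z_n$ equations of the closed-loop drift to have no drift term, matching the last two lines of \eqref{tf-e}. It then remains to show that the closed-loop drift $\tilde f=f+\alpha_1g_1+\alpha_2g_2$, written in the $z$ coordinates, has the triangular dependence $\tilde f_i=\phi_i(z_1,\dots,z_{i+1},z_n)$ for $1\le i\le n-2$ — equivalently, that $\partial\tilde f_i/\partial z_j=0$ for $j>i+1$ and $j\neq n$, and also $\partial\tilde f_{n-2}/\partial z_n$ is unconstrained while the earlier ones depend on $z_n$ too, which is already allowed by \eqref{tf-e}.

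The key point is that this triangular dependence is precisely what condition~2) encodes. By Remark~\ref{lbd-gp-r} the derived flags $G_k$ are feedback-invariant, so in the chained coordinates Lemma~\ref{lbd-cb} gives $G_k=\mathrm{span}\{X^1,\partial/\partial z_{n-1},\dots,\partial/\partial z_{n-1-k}\}$, hence $\Lambda^k=(G_k)^\perp=\mathrm{span}\{dz_1-z_2dz_n,\dots,dz_{n-2-k}-z_{n-1-k}dz_n\}$, which is exactly the codistribution of Lemma~\ref{rs-cb}. That lemma then identifies the retracting space $C^k_q=\mathrm{span}\{dz_1,\dots,dz_{n-1-k},dz_n\}|_q$. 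Now condition~2) says $L_f\omega_q\in C^k_q$ for every $\omega\in\Lambda^k$; I would apply it to the generators $\omega=dz_i-z_{i+1}dz_n$ and compute $L_f(dz_i-z_{i+1}dz_n)=d(L_f z_i)-(L_f z_{i+1})\,dz_n-z_{i+1}\,d(L_f z_n)$ using $L_fdz_j=d(L_fz_j)=d\langle dz_j,f\rangle=df_j$. Since $L_f$ of a function applied and $C^k$ consists of the span of $dz_1,\dots,dz_{n-1-k},dz_n$, requiring $df_i - (\text{stuff})\,dz_n - \cdots \in C^k$ forces $\partial f_i/\partial z_j=0$ for $n-1-k< j\le n-1$, i.e. for $j\ge n-k$. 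Letting $k$ range over $1,\dots,n-3$ and noting that feedback by $\alpha$ changes $f_i$ by combinations of the components of $g_1,g_2$ (which have the same triangular pattern as $X^1,X^2$), one gets for each $i\le n-2$ exactly the vanishing of $\partial\tilde f_i/\partial z_j$ for $j>i+1$, $j\neq n$ — precisely the dependence $\phi_i(z_1,\dots,z_{i+1},z_n)$ of \eqref{tf-e}. Together with the choice of $\alpha$ handling the last two coordinates and the already-present $z_{i+1}v_1$, $v_2$, $v_1$ terms inherited from the chained form of the control fields, this establishes \eqref{tf-e}.

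For the necessity direction, suppose \eqref{tf-e} holds on $V$ after $z=\varphi(x)$ and $u=\alpha+\beta v$. Then the control vector fields of \eqref{tf-e} are exactly those of the chained form \eqref{tf-dlt}, so Theorem~\ref{cfec} (applied to the driftless part, using feedback-invariance of the flags) yields condition~1) on a neighbourhood of $p$. For condition~2), reverse the computation above: in the $z$-coordinates $\Lambda^k$ and $C^k$ are as computed from Lemmas~\ref{rs-cb} and \ref{lbd-cb}, and the triangular dependence $\phi_i=\phi_i(z_1,\dots,z_{i+1},z_n)$ plus the drift-free last two equations give exactly $L_{\tilde f}(dz_i-z_{i+1}dz_n)\in C^k_q$; since the original drift $f=\tilde f-\alpha_1g_1-\alpha_2g_2$ differs from $\tilde f$ by elements of $\Delta\subset G_k$, and one checks $L_{g_j}\omega\in C^k$ as well for $\omega\in\Lambda^k=(G_k)^\perp$ (this is essentially the Cauchy-characteristic property of the associated space, since $g_1,g_2$ lie in the ``$\partial/\partial z$'' directions spanning $A(\Lambda^k)$), one concludes $L_f\omega_q\in C^k_q$. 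Care must be taken that conditions~1), 2) are stated on neighbourhoods of $p$ while the normal form holds only on $V$; intersecting the relevant open sets handles this.

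The main obstacle I anticipate is the bookkeeping in the sufficiency direction: translating the single coordinate-free inclusion $L_f\omega_q\in C^k_q$, for all $k$ and all $\omega\in\Lambda^k$, into the precise collection of vanishing partial derivatives $\partial\tilde f_i/\partial z_j=0$, and verifying that no $z_n$-dependence is inadvertently excluded (the form \eqref{tf-e} deliberately allows every $\phi_i$ to depend on $z_n$, which corresponds to the presence of $dz_n$ in $C^k_q$ — it is important that the lemma puts $dz_n$ in the retracting space). One must also confirm that applying the feedback $\alpha=\beta\overline\alpha$ does not destroy the chained form of the control vector fields — but this is immediate since $\beta v$ is unchanged — and that it produces no drift in the $\dot z_{n-1}$, $\dot z_n$ equations, which is exactly the defining property of $\overline\alpha$. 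Everything else is a routine Lie-derivative computation using the antiderivation property of $i_X$ and $L_X\,dz_j=d\langle dz_j,f\rangle$.
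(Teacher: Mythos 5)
Your proposal is correct and follows essentially the same route as the paper: Theorem~\ref{cfec} to put the control fields in chained form, Lemma~\ref{lbd-cb} and Lemma~\ref{rs-cb} to identify $G_k$, $\Lambda^k$ and $C^k_q$ in those coordinates, the feedback $\overline{\alpha}=-(\langle dz_n,f\rangle,\langle dz_{n-1},f\rangle)$ to kill the last two drift components, and the translation of condition~2) into the triangular dependence of the $\phi_i$. One small slip in your necessity argument: $\widehat{g}_1$ does \emph{not} lie in $A(\Lambda^k)_q=\mbox{span}_{\mathbb{R}}\{\partial/\partial z_{n-k},\dots,\partial/\partial z_{n-1}\}|_q$, so the inclusion $L_{\widehat{g}_1}\omega_q\in C^k_q$ is not a Cauchy-characteristic property; but the fact you actually need, namely $L_{\widehat{g}_1}(dz_i-z_{i+1}dz_n)=dz_{i+1}-z_{i+2}dz_n\in C^{n-2-i}_q$ together with $\langle\omega,\widehat{g}_j\rangle=0$ killing the $d\alpha_j$ correction terms, holds by direct computation exactly as in the paper, so the argument goes through.
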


\begin{remark}\label{construction}
Note that the control vector fields in \eqref{tf-dlt} and \eqref{tf-e} are exactly the same, that condition~1) above is precisely assertion~1)
in Theorem~\ref{cfec} for driftless systems, and that the drift $f$ in (\ref{cs}) is taken into account by means of condition~2). Assume that both conditions of Theorem~\ref{tf-nsc} are met. It can be seen from the sufficiency part of the proof of Theorem~\ref{tf-nsc} that the required change of
coordinates $z=\varphi(x)$ and $\beta(x)$ in the regular feedback $u=\alpha + \beta v$ may always be taken as the ones provided by Theorem~\ref{cfec} (except for a possible restriction of their domain of definition). This means that, in order for \eqref{cs} to be described by \eqref{tf-e}, it suffices to obtain a change of coordinates $z=\varphi(x)$ and a feedback $\beta(x)$ such that the controls vector fields $g_1, g_2$ in \eqref{cs} are described by the chained form \eqref{tf-dlt} with $z=\varphi(x)$ and $u=\beta v$. The existence of such $z=\varphi(x)$ and $u=\beta v$ is ensured by condition 1) of Theorem~\ref{tf-nsc}, and in general they may be constructed by the techniques given in \cite[Algorithm~1]{TilMurSas95} and \cite[Section~3]{Res01}. Another approach is the result in \cite[Proposition~7]{MurSas93}. Having obtained the referred $z=\varphi(x)$ and $u=\beta v$, the proof of Theorem~\ref{tf-nsc} shows that condition 2) ensures that after applying the regular feedback $u=\alpha + \beta v$ in \eqref{cs}, where $\alpha = \beta \overline{\alpha}$ and $\overline{\alpha}= -( \langle dz_n, f \rangle, \langle dz_{n-1}, f \rangle)$ (cf.\ \eqref{fd-l} and \eqref{feedback}), one has that the closed-loop drift $f+\alpha_1 g_1+\alpha_2 g_2$ exhibits the triangular structure in \eqref{tf-e} in the coordinates $z=\varphi(x)$. Thus,
\eqref{cs} has been transformed into \eqref{tf-e}.
\end{remark}
%It can be seen from the sufficiency part of the proof of Theorem~\ref{tf-nsc} that the required change of
%coordinates $z=\varphi(x)$ and the mapping $\beta$ in the
%regular feedback $u=\alpha + \beta v$ may always be taken as the ones provided by Theorem~\ref{cfec} (except for a possible restriction of their domain of definition).
%This means that, in order for \eqref{cs} to be described by \eqref{tf-e}, it suffices to obtain a change of coordinates $z=\varphi(x)$ and $\beta$ such that the controls vector fields $g_1, g_2$ in \eqref{cs} are described by the chained form \eqref{tf-dlt} with $z=\varphi(x)$ and $u=\beta v$. Such goal may in general be achieved constructively by the techniques
%given in \cite[Algorithm~1]{TilMurSas95} and \cite[Section~3]{Res01}. Another approach is the result in \cite[Proposition~7]{MurSas93}.
%The proof of Theorem~\ref{tf-nsc} also shows that the corresponding $\alpha$ in the feedback $u=\alpha + \beta v$ is constructed as $\alpha=\beta\overline{\alpha}$ with $\overline{\alpha}=-(\langle dz_n, f \rangle, \langle dz_{n-1}, f \rangle)$ (cf.\ \eqref{fd-l} and \eqref{feedback}).

Before proving Theorem~\ref{tf-nsc}, one explains the conventions adopted in its
proof.
Let $\Delta$ a distribution on $U \subset M$ and $X^1, \dots, X^r \in
\mathfrak{X}(U)$. If
$\Delta=\mbox{span}_{\mathcal{C}^{\infty}(U)}\{ X^1, \dots, X^r \}$,
then one simply writes $\Delta=\mbox{span}\{ X^1,
\dots, X^r \}$. Let $V,W \subset U$ be open sets with $V \subset
W \subset U$ and $Y^1, \dots, Y^m \in \mathfrak{X}(W)$.
In case $\Delta|V=\mbox{span}_{\mathcal{C}^{\infty}(V)}\{ Y^1|V, \dots, Y^m|V
\}$, then one writes $\Delta|V=\mbox{span}\{ Y^1, \dots,$ $Y^m \}$
\emph{over} ${\mathcal{C}^{\infty}(V)}$. Analogous notations for
codistributions will also be used.

\begin{proof}
The result is shown for $n \geq 4$.
The reader will have no difficulty in verifying that the arguments presented
also assure its validity for $2 \leq n \leq 3$. One begins by
proving necessity.

\noindent \emph{(Necessity)} Consider the resulting closed-loop vector fields on $W=V$:
%% Augmented Closed-loop Field Relations
\begin{equation}\label{acfr}
    \begin{array}{l}
        \widehat{f} = f + \alpha_1 g_1 + \alpha_2 g_2 \in \mathfrak{X}(W), \\
	\widehat{g}_1 = \beta_{11} g_1 + \beta_{12} g_2, \quad \widehat{g}_2 = \beta_{21} g_1 + \beta_{22} g_2 \in G_0|W,
    \end{array}
\end{equation}
with $\beta=(\beta_{ij})$. The expressions of
$\widehat{f}, \widehat{g}_1, \widehat{g}_2$ in the coordinates
$z=(z_1,\dots,z_n)$ are respectively given as
%% Hat A - Coordinate Description
\begin{equation}\label{ha-cd}
    \begin{array}{l}
    \widehat{f}(z)=(\phi_1(z), \dots, \phi_{n-2}(z),0,0), \\
    \widehat{g}_1(z)=(z_2,\dots,z_{n-1},0,1), \\
    \widehat{g}_2(z)=(0, \dots, 0, 1, 0),
    \end{array}
\end{equation}
where $\phi_1, \dots, \phi_{n-2}$ are as in \eqref{tf-e}.
Let $(\widehat{F}_k)$ and $(\widehat{G}_k)$ be the Lie and the derived
flags of $\widehat{\Delta}=\mbox{span}_{\mathcal{C}^\infty(W)} \{
\widehat{g}_1, \widehat{g}_2 \}$, respectively.
According to Lemma~\ref{lbd-cb}, $\widehat{F}_{k} = \widehat{G}_{k} =
\mbox{span}_{\mathcal{C}^\infty(W)} \{
\widehat{g}_1, \partial / \partial z_{n-1},$ $\partial / \partial z_{n-2},
\dots,
\partial / \partial z_{n-1-k} \}$ with
$\dim(\widehat{F}_k) = \dim(\widehat{G}_k) = 2 +
k$, for $0 \leq k
\leq n-2$, and $\widehat{g}_2=\partial / \partial z_{n-1}$. Since
$\beta=(\beta_{ij})$ is invertible on
$W$, one concludes from Remark~\ref{lbd-gp-r} that $F_{k}|W = \widehat{F}_{k} =
\widehat{G}_{k} = G_{k}|W$ with $\dim(F_{k}|W) =
\dim(G_{k}|W) = 2 + k$, for every $0 \leq k \leq
n-2$. Define $\theta_i = dz_i-z_{i+1}dz_n$, for $1 \leq i \leq n-3$. Fix $1 \leq i \leq n-3$. By restricting $W$ if necessary, one has that
%% Annihilator With Drift - Coordinate Description
\begin{equation}\label{awd-cd}
    \begin{split}
    \Lambda^{n-2-i}|W = (G_{n-2-i}|W)^{\perp} = \mbox{span}
    \{ \theta_1,  \dots, \theta_i \} \subset (G_0|W)^{\perp}
    \end{split}
\end{equation}
over $\mathcal{C}^\infty(W)$. Therefore, using Lemma~\ref{rs-cb},
%% Retracting Space - Coordinate Description
\begin{equation}\label{rs-cd}
    C^{n-2-i}_q=\mbox{span}_{\mathbb{R}}
    \{dz_1, \dots, dz_{i+1}, dz_n \}|_q \supset \Lambda^{n-2-i}_q, \quad \mbox{for } q \in W.
\end{equation}

From (\ref{ha-cd}), one gets $L_{\widehat{f}} \theta_i=
        d\phi_i - \phi_{i+1}dz_n$,
$L_{\widehat{g}_1} \theta_i =
        dz_{i+1} - z_{i+2} dz_n$,
$L_{\widehat{g}_2} \theta_i = 0$,
with $d\phi_i \in \mbox{span}_{\mathcal{C}^\infty(W)} \{ dz_1,
dz_2,$ $\dots, dz_{i+1}, dz_n \}$. By (\ref{rs-cd}),
$(L_{\widehat{f}} \theta_i)_{q}, (L_{\widehat{g}_1}
\theta_i)_{q},$ $(L_{\widehat{g}_2} \theta_i)_{q} \in
C^{n-2-i}_{q}$,
for $q \in W$. Let $q \in W$,
$\omega \in \Lambda^{n-2-i}$, $\overline{\omega} = \omega|W$. Then (\ref{awd-cd}) and
(\ref{rs-cd}) imply
$L_{\widehat{f}}\overline{\omega}_{q}$,
$L_{\widehat{g}_1}\overline{\omega}_{q}$, $L_{\widehat{g}_2}\overline{
\omega}_{q} \in C^{n-2-i}_{q}$, and (\ref{acfr}) gives that
$L_{f}\overline{\omega}_{q} \in
C^{n-2-i}_{q}$ since $\langle \overline{\omega}, \widehat{g}_1
\rangle = \langle \overline{\omega}, \widehat{g}_2 \rangle = 0$ by
\eqref{awd-cd} again.

\noindent \emph{(Sufficiency)} By hypothesis, $p$ is a regular point of
$F_k, G_k$ with $\dim(F_k(p)) = \dim(G_k(p)) = 2 + k$, for $0 \leq k
\leq n-2$.
Hence Theorem~\ref{cfec} establishes that, by restricting $W$ if necessary,
there exists a change of coordinates $z=\varphi(x)$ and regular static state
feedback
$u=\beta v$, both defined on $W$, such that the expressions of $f$,
%% Restricted Augmented Closed-loop Field Relations
\begin{equation}\label{racfr}
    \begin{array}{l}
        \widehat{g}_1=\beta_{11} g_1 +
\beta_{12} g_2, \quad \widehat{g}_2=\beta_{21} g_1 + \beta_{22}
g_2,
    \end{array}
\end{equation}
in the coordinates $z=(z_1, \dots, z_n)$, are respectively given as
%% Hat A - Coordinate Description - Local
\begin{equation}\label{ha-cd-l}
    \begin{array}{l}
     f(z)=(\gamma_1(z), \dots, \gamma_n(z)), \\
    \widehat{g}_1(z) = (z_2, \dots, z_{n-1}, 0, 1),
\\  \widehat{g}_2(z) = (0, \dots, 0, 1, 0).
    \end{array}
\end{equation}
It can be assumed
that\footnote{This follows by using an adequate bump function and by restricting
$W$ if necessary.}
$\beta=\widetilde{\beta}|W$, where
$\widetilde{\beta}=(\widetilde{\beta}_{ij})$:~$U \rightarrow \mathbb{R}^{2 \times
2}$ is smooth.

Since $\beta=(\beta_{ij})$ is invertible on $W$, the
proof of necessity above provides, by restricting $W$ if necessary,  that
% Lie Bracket Distributions - Canonical Basis - Local
\begin{align}\label{lbd-cb-l}
    \begin{array}{l}
    G_{k}|W =\mbox{span} \{ \widehat{g}_1, \partial /
\partial z_{n-1}, \dots, \partial / \partial
z_{n-1-k} \}
    \end{array}
\end{align}
with $\widehat{g}_2=\partial /
\partial z_{n-1}$, $\dim(G_k|W) = 2 + k$, for $1 \leq k \leq
n-2$, and
%% Local Annihilator Span
\begin{equation}\label{las}
    \Lambda^{n-2-i}|W = (G_{n-2-i} | W)^{\perp} = \mbox{span} \{\theta_1, \dots,
\theta_{i}\}
\end{equation}
over $\mathcal{C}^\infty(W)$, for $1 \leq i \leq n-3$, where $\theta_i = dz_i-z_{i+1}dz_n$.
Hence,
(\ref{las}) and Lemma~\ref{rs-cb}
give
%% Retracting Space - Coordinate Description - Local
\begin{equation}\label{rs-cd-l}
    C^{n-2-i}_{q} = \mbox{span}_{\mathbb{R}} \{dz_1, \dots, dz_{i+1},
dz_n \}|_{q}
\supset \Lambda^{n-2-i}_{q},  \, q \in W, \; 1 \leq i \leq n-3.
\end{equation}

Consider the following maps on $W$ (see
(\ref{ha-cd-l})):
%% Feedback Definition - Local
\begin{equation}\label{fd-l}
    \begin{split}
    & \overline{\alpha} = (\overline{\alpha}_1, \overline{\alpha}_2) \triangleq
-( \langle dz_n, f \rangle, \langle dz_{n-1}, f \rangle)  = -(\gamma_{n},
\gamma_{n-1}), \\
    & \widehat{f} =  f + \overline{\alpha}_1 \widehat{g}_1
+ \overline{\alpha}_2 \widehat{g}_2, \\
    & \phi_i = \langle dz_{i}, \widehat{f} \rangle, \qquad \mbox{for } 1 \leq i \leq n-2.
    \end{split}
\end{equation}
By construction,
%% Feedback Drift Vector - Coordinate Description - Local
\begin{equation}\label{fdv-cd-l}
    \begin{split}
    & \langle
dz_{n}, \widehat{f} \rangle = \langle
dz_{n-1}, \widehat{f} \rangle = 0.
    \end{split}
\end{equation}
Indeed, $\langle dz_{n}, \widehat{g}_1
\rangle = \langle dz_{n-1}, \widehat{g}_2 \rangle = 1, \langle dz_{n-1},
\widehat{g}_1 \rangle = \langle dz_{n}, \widehat{g}_2 \rangle = 0$ (cf.\ \eqref{ha-cd-l}). Thus,
$\langle dz_{n-j}, \widehat{f} \rangle = \gamma_{n-j} - \gamma_{n-j} = 0$, for $0 \leq j
\leq 1$.

Now, because $\widehat{f}(z) = (\phi_1(z), \dots, \phi_{n-2}(z),0,0)$, it remains to show that
%% Zero Differential Coefficients
\begin{equation}\label{zdc}
  d \phi_i|_{q} \in
\mbox{span}_{\mathbb{R}} \{ dz_1, \dots,  dz_{i+1}, dz_n \}|_{q}, \quad \mbox{for } q \in W, \; 1 \leq i \leq n-3.
\end{equation}
Fix $1 \leq i \leq n-3$, $q \in W$. By \eqref{ha-cd-l}, $L_{\widehat{g}_1} \theta_i =
 dz_{i+1} - z_{i+2} dz_n$ and $L_{\widehat{g}_2} \theta_i = 0$, and $L_{\widehat{f}}\theta_i =
d\phi_i - \phi_{i+1} dz_n$ from (\ref{fdv-cd-l}).
It can be assumed, by
restricting
$W$ if necessary, that
$\overline{\alpha}=\widetilde{\alpha}|W$ and $\theta_i = \widetilde{\theta_i}|W$, where
$\widetilde{\alpha}=(\widetilde{\alpha}_1,\widetilde{\alpha}_2)$,
$\widetilde{\alpha}_1, \widetilde{\alpha}_2 \in
\mathcal{C}^{\infty}(U)$, and $\widetilde{\theta_i} \in \Lambda^{n-2-i} = (G_{n-2-i})^\perp$.
Since $\beta = \widetilde{\beta} | W$, $G_0
\subset G_k$ ($k \in \mathbb{N}$) and $\widetilde{\theta}_i \in \Lambda^{n-2-i}$, it follows
from condition 2) in Theorem~\ref{tf-nsc}, \eqref{racfr}, \eqref{rs-cd-l} and (\ref{fd-l}) that
$(L_{\widehat{f}}\theta_i)_{q} = (L_{\widehat{f}}\widetilde{\theta}_i)_{q}  \in
C^{n-2-i}_{q}$. Hence, \eqref{zdc} holds.

Finally, let
$v=(v_1, v_2) \in \mathbb{R}^2$ and define $X^v=\widehat{f} + \widehat{g}_1 v_1 + \widehat{g}_2
v_2 \in
\mathfrak{X}(W)$. By (\ref{ha-cd-l}), (\ref{fd-l}),
(\ref{fdv-cd-l}) and (\ref{zdc}), the expression of $X^v$ in the
coordinates $z=(z_1,\dots,z_n)$ on $W$ is given by
\eqref{tf-e},
by restricting $W$ to an open connected set if necessary. Therefore,
\begin{equation}\label{feedback}
	u = \alpha + \beta v = \beta \overline{\alpha} + \beta v, \qquad \mbox{with } \overline{\alpha} = -( \langle dz_n, f \rangle, \langle dz_{n-1}, f \rangle),
\end{equation}
is the desired regular
feedback, because $\widehat{f}= f + \overline{\alpha}_1 \widehat{g}_1
+ \overline{\alpha}_2 \widehat{g}_2 $, $\widehat{g}_1 = \beta_{11}
g_1 + \beta_{12} g_2$, $\widehat{g}_2 = \beta_{21} g_1 + \beta_{22} g_2$,
$\overline{\alpha}=(\overline{\alpha}_1, \overline{\alpha}_2)$ and
$\beta=(\beta_{ij})$.
\end{proof}

By using Theorem~\ref{tf-dls}, one obtains the generic result:

%% Triangular Form - Dense Sufficient Conditions
\begin{corollary}\label{tf-dsc}
Consider system (\ref{cs}) and define $G_k$, $\Lambda^{k}$,
$C^k_{q}$ as in
Theorem~\ref{tf-nsc}. Suppose that there exists an open dense set $D$ in $U$
such that:
\begin{enumerate}[1)]
    \item $\dim(G_k(q)) = 2 + k$, for $0 \leq k \leq n-2$, $q \in D$;
    \item $L_{f}\omega_{q} \in C^k_{q}$, for $1 \leq k \leq n-3$, $q
\in D$, $\omega \in \Lambda^{k}$.
\end{enumerate}
Then, there exists an open dense set $V$ in $U$ such that, for all $p
\in V$, there exists a change of coordinates $z=\varphi(x)$
and a regular static state feedback of the form
$u=\alpha + \beta v$, both defined on an open neighborhood $W
\subset U$ of $p$, in which the expression of the resulting closed-loop system
in the coordinates $z=(z_1, \dots, z_n)$ is given by
\eqref{tf-e}.
\end{corollary}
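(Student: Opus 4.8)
The plan is to deduce the corollary by combining the generic driftless result Theorem~\ref{tf-dls} with the local characterization Theorem~\ref{tf-nsc}; the only step demanding care is upgrading condition~2), which is assumed merely on the dense set $D$, to an open set. (For $n\le 3$ condition~2) is vacuous and the statement is just Theorem~\ref{tf-dls}, so assume $n\ge 4$.)

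\emph{Step 1.} First I would apply Theorem~\ref{tf-dls} to condition~1), which is exactly its hypothesis: this yields an open dense $V\subset U$ such that every $p\in V$ is a regular point of all $F_k,G_k$ with $\dim(F_k(p))=\dim(G_k(p))=2+k$ for $0\le k\le n-2$, and such that property~2) of Theorem~\ref{cfec} holds at $p$. Fixing $p\in V$, there is an open neighbourhood $W\subset U$ of $p$, which may be freely shrunk, on which $\dim(F_k)=\dim(G_k)=2+k$ for $0\le k\le n-2$ (so condition~1) of Theorem~\ref{tf-nsc} holds on $W$) and on which a change of coordinates $z=\varphi(x)$ and a regular feedback $u=\beta v$ bring $g_1,g_2$ into the chained form $\widehat g_1(z)=(z_2,\dots,z_{n-1},0,1)$, $\widehat g_2=\partial/\partial z_{n-1}$, as in \eqref{ha-cd-l}. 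Since $D$ is dense in $U$, the set $D\cap W$ is dense in $W$.

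\emph{Step 2 (the key step).} In these coordinates the computations \eqref{las}--\eqref{rs-cd-l} give, after shrinking $W$ once more, that $\Lambda^k|W=\mbox{span}\{\theta_1,\dots,\theta_{n-2-k}\}$ with $\theta_j=dz_j-z_{j+1}dz_n$, and, by Lemma~\ref{rs-cb}, $C^k_q=\mbox{span}_{\mathbb R}\{dz_1,\dots,dz_{n-1-k},dz_n\}|_q\supset\Lambda^k_q$ for $q\in W$, $1\le k\le n-3$. The decisive point is that $C^k$ now has \emph{constant} rank $n-k$ on $W$: writing $L_f\theta_j=\sum_{l=1}^n a^j_l\,dz_l$ with $a^j_l\in\mathcal C^\infty(W)$, the condition $L_f\theta_{j,q}\in C^k_q$ reads $a^j_l(q)=0$ for $l\in\{n-k,\dots,n-1\}$, which cuts out a closed subset of $W$. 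Hence $Z\subset W$, the set of $q$ where $L_f\theta_{j,q}\in C^k_q$ for all $1\le k\le n-3$ and $1\le j\le n-2-k$, is closed in $W$; by condition~2) of the corollary it contains the dense set $D\cap W$, so $Z=W$. Finally, as $\Lambda^k|W$ is generated by the $\theta_j$ and $\theta_{j,q}\in\Lambda^k_q\subset C^k_q$, any $\omega\in\Lambda^k$ has $\omega|W=\sum_j h_j\theta_j$, so $L_f\omega=\sum_j(L_fh_j)\theta_j+\sum_j h_j\,L_f\theta_j$ lies in $C^k_q$ at every $q\in W$. Since $C(\Lambda^k)_q=C(\Lambda^k|W)_q$, condition~2) of Theorem~\ref{tf-nsc} holds throughout $W$.

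\emph{Step 3.} Both conditions of Theorem~\ref{tf-nsc} now hold on the neighbourhood $W$ of $p$, and that theorem supplies a change of coordinates and a regular feedback $u=\alpha+\beta v$, defined near $p$, transforming \eqref{cs} into \eqref{tf-e}. Letting $p$ range over the open dense $V$ finishes the proof. I expect the only genuine obstacle to be the constant-rank property of $C^k$ used in Step~2 --- it is what makes $Z$ closed and lets condition~2) propagate from the dense set $D$ to the open set $W$ --- and this property is precisely what is bought by first passing, via condition~1) and Theorem~\ref{cfec}/Theorem~\ref{tf-dls}, to the chained-form coordinates of Step~1.
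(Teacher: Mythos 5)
Your proof is correct and follows the route the paper intends: Theorem~\ref{tf-dls} supplies the open dense set $V$ together with the chained-form coordinates (hence condition~1) of Theorem~\ref{tf-nsc} on a neighbourhood of each $p\in V$), and Theorem~\ref{tf-nsc} is then applied locally. The paper offers no argument beyond ``By using Theorem~\ref{tf-dls}, one obtains the generic result,'' so your Step~2 --- noting that in the chained-form coordinates $C^k_q=\mbox{span}_{\mathbb{R}}\{dz_1,\dots,dz_{n-1-k},dz_n\}|_q$ has constant rank, which makes $L_f\theta_{j,q}\in C^k_q$ a closed condition that propagates from the dense set $D\cap W$ to all of $W$ --- supplies exactly the one detail the paper leaves implicit, and it is the only step that genuinely requires an argument.
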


One exhibits in the sequel two examples of application of Theorem~\ref{tf-nsc}.

%% Triangular Form Sufficient Conditions - Example
\begin{example}\label{tfsc-e}
Consider system \eqref{cs} with $U=\mathbb{R}^4$, $x=(x_1, \dots, x_4)
\in
\mathbb{R}^4$ and
\begin{align*}
 & f(x)=(0,x_1^2 + x_2,1,x_1 x_4), \\
 & g_1(x)=(x_4^2+1,(x_3 -2 x_1)(x_4^2+1), 0, (x_1^2+x_2)(x_4^2+1)), \\
 & g_2(x)=(0,0,1,0).
\end{align*}
By relying on Remark~\ref{construction}, it will be shown that such system can be described by
(\ref{tf-e}) around every point of $\mathbb{R}^4$. One has
\begin{align*}
  & g_3(x)=[g_1, g_2](x)=(0, -(x_4^2+1), 0, 0), \\
  & g_4(x)=[g_1, g_3](x)=(0,-2x_4(x_1^2+x_2)(x_4^2+1),0,(x_4^2+1)^2), \\
  & g_5(x) = [g_2, g_3](x) = 0,
\end{align*}
for $x \in \mathbb{R}^4$.
According to Remark~\ref{flags},
$\dim(F_k)=\dim(G_k) = 2+k$, for $0 \leq k \leq 2$. Since
$G_1=\mbox{span}_{\mathcal{C}^\infty(\mathbb{R}^4)}\{g_1, g_2, g_3\}$, it is straightforward to obtain\footnote{A computational procedure for determining
$C^1_{p}$ is outlined in the proof of Lemma~\ref{rs-cb}.}
\begin{align*}
  &\Lambda^1 = (G_1)^\perp
    = \mbox{span}_{\mathcal{C}^\infty(\mathbb{R}^4)}\{\gamma\}, \quad \mbox{where } \gamma=(x_1^2+x_2) dx_1 - dx_4, \\
  & C^1_{p} = \mbox{span}_{\mathbb{R}}\{dx_1, dx_2, dx_4\}|_{p} \supset
\Lambda^1_p,
\end{align*}
for $p \in \mathbb{R}^4$. It is easy
to verify that $L_{f} \gamma_{p}=[(x_1^2 + x_2 - x_4)dx_1 - x_1 dx_4]|_p \in C^1_{p}$,
for $p \in \mathbb{R}^4$. Using $\Lambda^1_p \subset C^1_{p}$, one
concludes that
$L_{f} \omega_{p} \in
C^1_{p}$, for $\omega \in \Lambda^1$, $p \in \mathbb{R}^4$, that is, one has
shown that the conditions in Theorem~\ref{tf-nsc} are met on $\mathbb{R}^4$.
The (global) change of coordinates $z=\varphi(x)=(x_4,x_1^2+x_2,x_3,x_1)$
transforms $f(x), g_1(x), g_2(x)$ into
$f(z)=(z_1z_4,z_2,1,0)$, $g_1(z)=(z_2(z_1^2+1),z_3(z_1^2+1),0,z_1^2+1)$,
$g_2(z)=(0,0,1,0)$.
The obvious choice for $\beta$ in the feedback $u=\beta v$ is then
$\beta=(\beta_{ij}) =
\mbox{diag}((z_1^2+1)^{-1},1)=\mbox{diag}((x_4^2+1)^{-1},1) \in
\mathbb{R}^{2\times2}$, since
it yields $\widehat{g}_1(z)=(z_1^2+1)^{-1} g_1(z)=(z_2,z_3,0,1)$,
$\widehat{g}_2(z)=g_2(z)=(0,0,1,0)$.
It remains to specify $\alpha$ in $u=\alpha + \beta v$. Following Remark~\ref{construction}, one takes
$\alpha=\beta \overline{\alpha}$, where $\overline{\alpha}
=-(\langle dz_4,f \rangle,\langle dz_{3},f \rangle)=(0,-1)$.

Hence, after applying the
(global) feedback $u=\alpha + \beta v$, the closed-loop system in the
coordinates $z=(z_1,z_2,z_3,z_4)$ has the form \eqref{tf-e} with
\begin{align*}
 \dot{z} = (z_1z_4,z_2,0,0) + (z_2,z_3,0,1)v_1 + (0,0,1,0)v_2.
\end{align*}
Note that $y=(z_1,z_4)=(x_4,x_1)$ is a flat output around the points in which
$v_1 \neq 0$.

Although the present academic example has codimension $n-2=2$,
it does not satisfy the sufficient geometric conditions for flatness established in
\cite[p.\ 266]{BouBouBarKra11}. Indeed, $[g_1,g_2] \notin \mbox{span}\{g_1, g_2
\}$, $[g_1,g_2] \notin \mbox{span}\{g_1, \mbox{ad}_f g_1\}$, $[g_1,g_2] \notin
\mbox{span}\{g_2, \mbox{ad}_f g_2\}$,
for all $x \in \mathbb{R}^4$. Therefore, one concludes that the present example cannot be transformed into the flat triangular
canonical form considered in \cite{BouBouBarKra11} after a change of coordinates.

%For an arbitrary system, the desired change of coordinates $z=\varphi(x)$ can in
%general be constructed by the techniques given in
%\cite[Algorithm~1]{TilMurSas95} and \cite[Section~3]{Res01} (see also the
%method presented in \cite[pp.\ 533--534]{Sas99}). In such case, the feedback
%$u=\alpha+\beta v$ is constructed accordingly.
\end{example}

\begin{example}
As a practical engineering example,
consider the reduced order model (current-fed) of an induction motor with three
states
\begin{align*}
 \dot{\omega} & = \dfrac{n_p M}{JL}(\psi_a i_b - \psi_b i_a) - \dfrac{T_L}{J},
\bigskip \\
 \dot{\psi}_a & = - \dfrac{R}{L} \psi_a - n_p \omega \psi_b + M \dfrac{R}{L}
i_a, \bigskip \\
 \dot{\psi}_b & = - \dfrac{R}{L} \psi_b + n_p \omega \psi_a + M \dfrac{R}{L}
i_b,
\end{align*}
where $x=(x_1,x_2,x_3)=(\omega,\psi_a, \psi_b) \in \mathbb{R}^3$ is the state, $u=(u_1,u_2)=(i_a,
i_b) \in \mathbb{R}^2$ is the control, and $J, L, M, n_p, R, T_L \in \mathbb{R}$ are
the (constant) parameters. See \cite[p.\ 276]{MarTom95} for details. This system has the form $\eqref{cs}$ and it meets the conditions of Theorem~\ref{tf-nsc} at all $x
\in \mathbb{R}^3$. In fact, $\dim(F_k)=\dim(G_k) = 2+k$, for $0 \leq k \leq 1$ (see Remark~\ref{flags}). By Remark~\ref{construction}, in order to describe this system by the  triangular form \eqref{tf-e}, it suffices to find a change of coordinates $z=\varphi(x)$ and a feedback $\beta(x)$ in which the corresponding control vector fields $g_1, g_2$ are described by the chained form  \eqref{tf-dlt} with $z=\varphi(x)$ and $u = \beta v$. In the previous academic example they were directly provided, but in this one they shall be constructed based on \cite[Proposition~7]{MurSas93}. This will allow the construction of a flat output accordingly.

Using the same notation as in \cite[Proposition~7]{MurSas93}, it suffices to find smooth functions $h_1, h_2$ such that
$L_{g_1} h_1 = 1$, $dh_1 \Delta_1 = dh_2 \Delta_2=0$, where $\Delta_1 = \{g_2, [g_1, g_2] \}$ and $\Delta_2=\{g_2\}$. An obvious choice is
$h_1(x) = (MR)^{-1} L x_2$ and $h_2(x) = L^{-1} MR x_1 - (JL)^{-1} n_p M  x_2 x_3$. Consequently, \cite[Proposition~7]{MurSas93} implies that $z_1 = h_2(x)$, $z_2 = L_{g_1} h_2 (x) = -2 (J L^2)^{-1} n_p M  R^2  x_3$, $z_3 = h_1(x)$ is the desired change of coordinates and that
\[
	\beta =
\left(
\begin{array}{cc}
	1 & 0 \\
	L_{g_1}^2 h_1   & L_{g_2} L_{g_1} h_2

\end{array}
\right)^{-1} =
\left(\begin{array}{cc} 1 & 0\\ 0 & - (2 n_p M^3  R^2 )^{-1} J L^3  \end{array}\right)
\]
is the required (constant) feedback matrix. Such change of coordinates is in fact global, that is, it is a diffeomorphism from $\mathbb{R}^3$ onto $\mathbb{R}^3$, since $L_{g_2} L_{g_1} h_2(x)\neq0$ for all $x \in \mathbb{R}^3$. One referes the reader to the proof of \cite[Proposition~7]{MurSas93} for details.

Now, following Remark~\ref{construction}, take
\[
	\alpha = - \beta (\langle dz_3,f \rangle,\langle dz_{2},f \rangle)=(M R)^{-1} (R x_2 + n_p L  x_1 x_3, R x_3 - n_p L  x_1 x_2).
\]
After applying the global feedback $u=\alpha + \beta v$, one obtains that the closed-loop system is described in the global coordinates $z=(z_1,z_2,z_3)=\varphi(x)$ by the
triangular form \eqref{tf-e}:
\[
	\dot{z} = (\phi_1(z),0,0) + (z_2,0,1)v_1 + (0,1,0)v_2,
\]
where
\[
\phi_1(z) = \frac{-2 J^2 L^6 z_1 z_2^2 - 8 n_p^2 M^6  R^4  z_1 z_3^2 + 4 n_p^2 M^6 R^4  z_2 z_3^3 + J^2  L^6  z_2^3 z_3 - 8  L  M^5 R^4 T_L}{8 J L^2 M^4 R^3}.
\]
According to the regularity conditions stated in the introduction, one concludes that $y=(z_1,z_3)$ is a flat output for the induction motor model above around the points that satisfy
\[
	v_1 + \partial \phi_1 / \partial z_2(z) = v_1 - \frac{J L^4 z_2 \left(4 z_1 - 3 z_2 z_3\right)}{8 M^4 R^3} +  \frac{n_p^2 M^2 R  z_3^3}{2 J L^2}  \neq 0
\]
(in the $z$ coordinates), or equivalently, since $u_1=v_1$,
\[
	u_1 + \frac{n_p L  \left(n_p x_2^3 + 2 J R x_1 x_3 + n_p x_2  x_3^2  \right)}{2 J M R^2} \neq 0
\]
(in the original coordinates). Note that $z=\varphi(x)$ and $u=\alpha+\beta v$, as well the flatness condition above, do not depend on the torque load $T_L$.
\end{example}

\section{Conclusion}
This work has established necessary and sufficient geometric conditions for
\eqref{cs} to be described by the triangular form \eqref{tf-e}, which is (essentially) flat. One has treated
systems with only two inputs ($m=2$). A generalization of the conditions here obtained to systems with $m > 2$ inputs
has been recently achieved in \cite{Nicolau14}. In terms of future research, the triangular form
\eqref{tf-e} may give rise to the development of constructive
steering methods for systems with drift. The techniques described in
\cite{Sas99} and \cite{TilMurSas95} for the chained form \eqref{tf-dlt}
could point to that direction.

\def\cprime{$'$}

\end{document}